\documentclass[12 pt]{article}
\usepackage{amsmath,amsthm,amssymb}
\usepackage[all]{xy}
 \setcounter{tocdepth}{1}



\theoremstyle{plain}

\newtheorem{thm}{Theorem}[section]

\newtheorem{lem}[thm]{Lemma}
\newtheorem{cor}[thm]{Corollary}

\newtheorem{lem-dfn}[thm]{Lemma-Definition}

\theoremstyle{definition}

\newtheorem{dfn}[thm]{Definition}

\newtheorem{conv}[thm]{Convention}
\newtheorem{nota}[thm]{Notation}

\newtheorem{rem}[thm]{Remark}



\DeclareMathOperator{\Div}{Div}
\DeclareMathOperator{\divi}{div}

\DeclareMathOperator{\Mob}{Mob}
\DeclareMathOperator{\Fix}{Fix}

\DeclareMathOperator{\mult}{mult}

\DeclareMathOperator{\Supp}{Supp}

\DeclareMathOperator{\Image}{Image}
\DeclareMathOperator{\Interior}{Int}
\DeclareMathOperator{\Effbar}{\overline{Eff}}
\DeclareMathOperator{\Eff}{Eff}
\DeclareMathOperator{\Bgg}{Big}


\newcommand{\oo}{\mathcal{O}}
\newcommand{\QQ}{\mathbb{Q}}
\newcommand{\RR}{\mathbb{R}}
\newcommand{\Rb}{\mathbf{R}}

\newcommand{\CC}{\mathbb{C}}
\newcommand{\Cc}{\mathcal{C}}
\newcommand{\Bc}{\mathcal{B}}
\newcommand{\Ec}{\mathcal{E}}
\newcommand{\TT}{\mathbb{T}}
\newcommand{\ZZ}{\mathbb{Z}}
\newcommand{\LL}{\mathbb{L}}
\newcommand{\Lc}{\mathcal{L}}

\newcommand{\GG}{\mathbb{G}}
\newcommand{\NN}{\mathbb{N}}
\newcommand{\Bb}{\mathbf{B}}
\newcommand{\Fb}{\mathbf{F}}
\newcommand{\xb}{\mathbf{x}}
\newcommand{\wb}{\mathbf{w}}

\newcommand{\eb}{\mathbf{e}}
\newcommand{\mb}{\mathbf{m}}








\bibliographystyle{alpha}

\title{Finite generation of adjoint rings after Lazi\'c: an
  introduction\footnote{This note will appear on the 
volume \emph{Classification of
Algebraic Varieties}, Carel Faber, Gerard van der Geer
and Eduard Looijenga Eds., EMS Publishing House}} 

\author{Alessio Corti\\Department of
    Mathematics\\ Imperial College London\\ London SW7 2AZ, UK}

\date{} 


\begin{document}
\maketitle

\tableofcontents

\section{Introduction}
\label{sec:intro}

This note is an introduction to all the key ideas of Lazi\'c's
recent proof of the theorem on the finite generation of adjoint rings
\cite{lazic09:_towar_mmp}. (The theorem was first proved in
\cite{Hb}.) I try to convince you that, despite technical issues that
are not yet adequately optimised, nor perhaps fully understood,
Lazi\'c's argument is a self-contained and 
transparent induction on dimension based on lifting lemmas and relying
on none of the detailed general results of Mori theory. On
the other hand, it is shown in \cite{CoLa} that all the fundamental
theorems of Mori theory follow easily from the finite generation
statement discussed here: together, these results give a new and more
efficient organisation of higher dimensional algebraic geometry.

The approach presented here is ultimately inspired by a close reading
of the work of Shokurov~\cite{MR1993750}, I mean specifically his
proof of the existence of \mbox{3-fold} flips. Siu was the first to
believe in the possibility of a direct proof of finite generation,
and believing that something is possible is, of course, a big part of
making it happen. All mathematical detail is taken from
\cite{lazic09:_towar_mmp}; my contribution is merely exegetic. I begin
with a few key definitions leading to the statement of the main
result.

\subsection{Basic definitions}
\label{sec:basic-definitions}

\begin{conv}
  Throughout this paper, I work with nonsingular projective varieties
  over the complex numbers.

  Let $V$ is a (finite dimensional) real vector space defined over the
  rationals.
  By a \emph{cone} in $V$ I always mean a convex cone, that is a
  subset $\Cc \subset V$ such that $0\in \Cc$ and:
  \begin{align*}
&t\geq 0,\;\mathbf{v}\in \Cc\; \Rightarrow t\mathbf{v} \in \Cc\,,\\
&\mathbf{v}_1, \; \mathbf{v}_2 \in \Cc \Rightarrow
\mathbf{v}_1+\mathbf{v}_2 \in \Cc .    
  \end{align*}
  A \emph{finite rational cone} is a cone $\Cc \subset V$ generated by
  a finite number of rational vectors. 
 
  If $U\subset V$ an affine subspace, then I denote by
  $U(\RR)$ and $U(\QQ)$ the sets of real and rational points of $U$. 

  The end of a proof of a statement, or the absence of a proof, is
  denoted by  \qed
\end{conv}

\begin{nota}
  I denote by $\Rb$ one of $\ZZ$, $\QQ$, $\RR$. If $X$ is a normal
  variety, I denote by $\Div_\Rb X$ the group of (Weil) divisors on
  $X$ with coefficients in $\Rb$, and by $\Div_\Rb^+ X$ the sub-monoid
  of effective divisors. In this note, I almost always work
  with actual divisors, \emph{not} divisors modulo linear
  equivalence. For instance, when I write $K_X$, I mean that I have
  chosen a specific divisor in the canonical class; the choice is
  made at the beginning and fixed throughout the discussion.
\end{nota}

\begin{nota}
  If $X$ is a normal variety and $D\in \Div_\Rb X$ is a divisor on
  $X$, I write:
\[
D=\Fix D + \Mob D,
\]
where $\Fix D$ and $\Mob D$ are the \emph{fixed} and the \emph{mobile
  part} of $D$. The definition makes sense even when $D$ is not integral or
  effective. Indeed the sheaf $\oo_X(D)$ is defined as
\[
\Gamma\bigl(U,\oo_X(D)\bigr) = \bigl\{f\in k(X) \mid \divi_U
f+D_{|U}\geq 0 \bigr\}
\] 
and then, by definition:
\[
\Mob D = \sum m_E\, E 
\quad
\text{where}
\quad
m_E = -\inf \{\mult_E f \mid f\in H^0\bigl(X,\oo_X(D)\bigr) \} .
\]

In applications $D$ is almost always integral and effective. If $D$ is
not integral, then the definition says that
$\oo_X(D)=\oo_X\bigl(\lfloor D \rfloor\bigr)$; if $D$ is integral, then
$\Fix D = \Fix |D|$ is the fixed part of the complete linear system
$|D|$. 
\end{nota}

Throughout this paper, I use without warning the following elementary
fact, often called Gordan's lemma.

\begin{lem}
  \label{lem:8}
  Let $\Cc \subset \RR^r$ a finite rational cone. The monoid $\Lambda
  = \Cc \cap \ZZ^r$ is finitely generated. \qed
\end{lem}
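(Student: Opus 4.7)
The plan is to exhibit an explicit finite generating set by combining the given rational generators of $\Cc$ with the (finitely many) lattice points of a bounded ``fundamental region''.

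First I would fix rational generators $\vb_1,\dots,\vb_n$ of $\Cc$, and, after clearing denominators (which does not change the cone they span), assume $\vb_i\in\ZZ^r$ for each $i$. Next I would introduce the bounded parallelotope
\[
P=\Bigl\{\sum_{i=1}^n t_i\vb_i \mid 0\le t_i\le 1\Bigr\}\subset \Cc,
\]
and note that $P$ is compact, so the set $S=P\cap\ZZ^r$ is finite. I would then propose that
\[
\Lambda=\Cc\cap\ZZ^r\quad \text{is generated as a monoid by} \quad \{\vb_1,\dots,\vb_n\}\cup S.
\]

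To prove this generation statement, I would take an arbitrary $\wb\in\Lambda$ and, using that $\vb_1,\dots,\vb_n$ generate $\Cc$ as a cone, write $\wb=\sum t_i\vb_i$ with $t_i\ge 0$. Splitting $t_i=\lfloor t_i\rfloor+\{t_i\}$ gives
\[
\wb=\sum_{i=1}^n \lfloor t_i\rfloor\vb_i+\wb',\qquad \wb':=\sum_{i=1}^n \{t_i\}\vb_i.
\]
By construction $\wb'\in P$, and since $\wb$ and each $\vb_i$ lie in $\ZZ^r$, also $\wb'\in\ZZ^r$, so $\wb'\in S$. Hence $\wb$ is a non-negative integer combination of elements of the finite set $\{\vb_1,\dots,\vb_n\}\cup S$, as required.

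The only subtlety — and the main (very mild) obstacle — is verifying that the finiteness of $S$ actually holds: this uses only that $P$ is bounded (being the continuous image of the compact cube $[0,1]^n$) and that bounded subsets of $\ZZ^r$ are finite. Once this is in hand, the argument above produces the finite generating set directly, and the lemma follows. \qed
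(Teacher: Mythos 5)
Your proof is correct; it is the standard argument for Gordan's lemma, decomposing an arbitrary lattice point of $\Cc$ into a $\ZZ_{\geq 0}$-combination of integral generators plus a leftover lattice point in the compact fundamental parallelotope. The paper itself gives no proof to compare against (the $\qed$ immediately after the statement signals, per the paper's own convention, that the proof is omitted as well known), so there is nothing to contrast. One small point worth making explicit in your write-up: $\wb'\in\ZZ^r$ because $\wb' = \wb - \sum_i \lfloor t_i\rfloor\vb_i$ with the $\lfloor t_i\rfloor$ integers and $\wb,\vb_i\in\ZZ^r$ — you assert this but don't spell out the subtraction.
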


\begin{dfn}
  \label{dfn:1}
  Let $X$ be a nonsingular projective variety and $\Lambda = \Cc \cap
  \ZZ^r$ where $\Cc \subset \RR^r$ is a finite rational cone.
  \begin{enumerate}
  \item A \emph{divisorial ring} on $X$ is a $\Lambda$-graded ring of
  the form
\[
R(X; D)=\bigoplus_{\lambda \in \Lambda}H^0\bigl(X; D(\lambda)\bigr)
\quad
\text{where}
\quad
D \colon \Lambda \to \Div_\Rb X
\]
is a map such that $M(\lambda)=\Mob D(\lambda)$ is superadditive,
i.e., $M(\lambda_1+\lambda_2) \geq M(\lambda_1)+M(\lambda_2)$ for all
$\lambda_1, \, \lambda_2 \in \Lambda$. The map $D \colon \Lambda \to
\Div_\Rb X$ is called the \emph{characteristic system} of the ring.
When I wish to emphasise the grading by $\Lambda$, I write $R(X;
\Lambda)$ instead of $R(X; D)$.   
  \item A divisorial ring $R(X;D)$ is \emph{adjoint} if in addition
    \begin{itemize}
    \item $D\colon \Lambda \to \Div_\QQ X$ is \emph{rational PL},
      i.e., it is the restriction of a rational piecewise linear map
      that, abusing notation, I still denote by
\[
D \colon \Cc \to \Div_\RR X  .
\] 
      This means that there is a finite decomposition $\Cc =
      \cup_{i=1}^m \Cc_i$ into finite rational cones $\Cc_i\subset
      \RR^r$ such that $D_{|\Cc_i}$ is (the restriction of) a rational
      linear function. 
    \item there is a rational PL function $r\colon \Cc \to \RR_+$ and
    for all $\lambda \in \Lambda$
    $D(\lambda)=r(\lambda)\bigl(K_X+\Delta(\lambda) \bigr)$ with
    $\Delta(\lambda) \geq 0$.
    \end{itemize}
    \item An adjoint ring is \emph{big} if there is an ample
  $\QQ$-divisor $A$ and
\[
\Delta (\lambda) = A + B(\lambda)
\] 
  with all $B(\lambda) \geq 0$.
\item A big adjoint ring is \emph{klt (dlt)} if there is a fixed
  simple normal crossing (snc) divisor $\sum_{j=1}^r B_j\subset X$
  such that all $\Supp B(\lambda)\subset \cup_{j=1}^rB_j$ and all $\bigl(X, B
  (\lambda)\bigr)$ are klt (dlt).   
  \end{enumerate}
\end{dfn}

\begin{rem}
  \label{rem:6}
  \begin{enumerate}
  \item If $D\colon \Lambda \to \Div^+_\QQ X$ is superadditive and rational
  PL, then it is also concave.
  \item If $R(X;D)$ is an adjoint ring then in particular: $\Delta
    \colon \Cc \to \Rb_+$ is \emph{homogeneous of degree $0$}; that
    is, $\Delta (t\wb)=t\Delta (\wb)$ for all $t\in \RR_+$,
    $\wb \in \Cc$.
  \item Note that in the definition of dlt (klt) big adjoint ring, all
    divisors in sight are contained in a fixed snc divisor
    $\sum_{j=1}^r B_j$. In this context, a (big) adjoint ring is dlt
    (klt) if and only if
\[
B (\lambda)=\sum_{j=1}^r b_j(\lambda) B_j
\] 
where all $0\leq b_j(\lambda)\leq 1$ ($<1$) for all $\lambda \in
\Lambda$, $i=j,\dots, r$. In this paper we never need the definitions,
results and techinques of the general theory of singularities of
pairs.
  \end{enumerate}  
\end{rem}

\subsection{The main result}
\label{sec:main-result}

\begin{thm}[Theorem~$A$]
  \label{thm:8}
  \cite{Hb} Let $X$ be nonsingular projective, and $R=R(X;D)$ a dlt
  big adjoint ring on $X$. Assume, in addition, that $D \colon \Lambda
  \to \Div_\QQ^+ X$, that is, $D(\lambda) \geq 0$ for all $\lambda \in
  \Lambda$. Then $R=R(X;D)$ is finitely generated.
\end{thm}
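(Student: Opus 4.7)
The plan is to proceed by induction on $\dim X$, reducing finite generation on $X$ to finite generation of a restricted algebra on a component of the dlt boundary, which by the inductive hypothesis is itself a lower-dimensional dlt big adjoint ring. The base case $\dim X = 0$ is trivial.

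First I would exploit the rational PL structure. Since $D$ is rational PL, $\Cc$ decomposes into finitely many finite rational subcones $\Cc_i$ on each of which $D$ is rational linear, and Gordan's lemma says each $\Cc_i \cap \ZZ^r$ is a finitely generated monoid. Standard arguments reduce the problem to the case where $D$ is linear on a single finite rational cone; after rescaling I may take $r(\lambda) \equiv 1$, so that $D(\lambda) = K_X + A + B(\lambda)$ with $A$ a fixed ample $\QQ$-divisor and $B(\lambda) = \sum_{j=1}^r b_j(\lambda) B_j$ supported on a fixed snc divisor, with $0 \le b_j(\lambda) \le 1$ for all $\lambda \in \Lambda$.

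Second, the crux is to pick a component $S = B_{j_0}$ of the dlt boundary and analyse the restriction map $\rho \colon R(X;D) \to R(S; D|_S)$. By adjunction, $D(\lambda)|_S$ has the form $K_S + A|_S + B'(\lambda)$ where $B'$ arises from the different, so the image of $\rho$ sits inside a dlt big adjoint ring on $S$, of one lower dimension and hence finitely generated by induction. To pass this back up to $X$ one needs a \emph{lifting lemma}: sections of the restricted adjoint system on $S$ must extend to $X$, uniformly enough in $\lambda \in \Lambda$ to identify $\Image \rho$ with the full adjoint ring on $S$ (up to finite index in each graded piece). Such a Hacon--McKernan--Takayama style extension theorem, which exploits the dlt hypothesis (so that $S$ appears with coefficient one), the ample perturbation $A$ (providing positivity via Kawamata--Viehweg vanishing), and the effectivity assumption $D(\lambda) \geq 0$, is the technical heart of Lazi\'c's argument.

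Finally, to close the induction, I would control $\Ker \rho$: elements of the kernel are sections of divisors of the form $D(\lambda) - S$, and a noetherian induction on the number of ``active'' boundary components $B_j$, effectively absorbing $S$ into the coefficient data and repeating the construction, yields finite generation of the kernel over the lifted generators. The main obstacle is the lifting step: one needs an extension theorem that operates on the entire characteristic system simultaneously and respects the PL decomposition of $\Cc$, so that passing to $S$ genuinely reduces the problem by one dimension rather than only relating it to some different and larger graded algebra on $S$. Managing this uniformity, together with the inductive bookkeeping across the polyhedral subdivision of $\Cc$, is where the real difficulty lies.
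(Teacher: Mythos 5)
Your sketch has the right broad flavour (induction on dimension, restriction to a boundary divisor, lifting lemmas), but it skips the genuine crux, which is what the paper calls ``the main construction,'' and the step you call ``pick a component $S = B_{j_0}$ of the dlt boundary'' does not work as stated.

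The problem is that a dlt big adjoint ring has $B(\lambda) = \sum b_j(\lambda) B_j$ with $0\leq b_j(\lambda)\leq 1$, and there is in general no single component $B_{j_0}$ for which $b_{j_0}(\lambda)=1$ for all $\lambda\in\Lambda$ --- that is precisely the (stronger) definition of a \emph{strictly} dlt ring. Yet the entire lifting/restriction machinery (Theorem~\ref{thm:12}) is only available for strictly dlt rings, since the lifting lemma requires $S$ to appear in $\Delta$ with coefficient exactly one. The paper handles this by first passing to the klt case (Lemma~\ref{lem:4}), and then \emph{inflating} the cone: it replaces $\Cc$ by the cone over a parallelepiped $\Bc=\prod_i[b_i,1]B_i$ (Lemma~\ref{lem:10}) and decomposes this inflated cone along the ``back faces'' $\Bc_j$ where one coordinate is pinned to $1$, producing finitely many strictly dlt subrings $R_j$ with $R=\sum_j R_j$. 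The effectivity hypothesis $D(\lambda)\geq 0$ is what makes the inflated ring well-defined and lets the covering of $\Cc$ by translated parallelepipeds work --- it is not primarily fuel for the lifting lemma as you suggest. Without some version of this inflation-and-chopping, your restriction map to a single $S$ is not set up to apply the lifting theory at all.

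Your treatment of the kernel is also vaguer than what is needed. The paper does not run a Noetherian induction on ``active boundary components''; instead it adjoins the canonical sections $\sigma_j\in H^0(X;B_j)$ formally, proving finite generation of $R[\sigma_1,\dots,\sigma_r]$ by induction on a total-degree function, using that whenever $\rho_j(x-x_j)=0$ one can write $x-x_j=\sigma_j y$ with $y$ of strictly smaller total degree and still lying in the cone (this last point requires the explicit combinatorics of the parallelepiped). Finally, the induction is not simply $A_{n-1}\Rightarrow A_n$: the actual implication used is $B^+_{n-1}+B_n\Rightarrow A_n$, with Theorems $B$, $B^+$, $C$ carrying the polyhedrality input (e.g.\ that $\Bc^{S=1}_{V,A}$ is a rational polytope) that makes the restriction in Theorem~\ref{thm:12} land in a klt big adjoint ring on a model of $S$. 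Your proposal as written would need to import all of that structure to be salvageable.
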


\begin{rem}
  \label{rem:10}
  The additional assumption can be removed. The statement is written
  here in the form that best suits the logic of proof described below in
  section~\ref{sec:logic-proof}. Once theorem~$A$, and theorems~$B$ and~$C$
  of section~\ref{sec:logic-proof}, are proved, then the additional
  assumption is removed by a straightforward application of theorem~$C$.   
\end{rem}

\begin{cor}
  \label{cor:1}
  If $X$ is nonsingular projective of general type, then the canonical
  ring $R(X,K_X)$ is finitely generated. \qed
\end{cor}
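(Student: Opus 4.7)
The plan is to realise a Veronese subring of $R(X,K_X)$ as a klt big adjoint ring of rank~$1$, and then invoke Theorem~A directly.

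Since $X$ is of general type, $K_X$ is big, so Kodaira's lemma supplies an integer $m\geq 1$, an ample $\QQ$-divisor $A$ and an effective $\QQ$-divisor $B$ with $mK_X \sim_\QQ A+B$; adjusting $A$ within its linear equivalence class, I may assume this is an identity of honest $\QQ$-divisors. Passing to a log resolution $\pi\colon Y\to X$ of $B$ does not alter the canonical ring, because in the smooth case $K_Y=\pi^*K_X+E$ with $E\geq 0$ exceptional, whence $H^0(Y,nK_Y)=H^0(X,nK_X)$ for all $n\geq 0$. A small perturbation of $\pi^*A$ by effective exceptional divisors restores ampleness on $Y$; so, after replacing $X$ by $Y$, I may assume I have an identity $mK_X=A+B$ on $X$ with $A$ ample, $B\geq 0$ and $B$ snc.

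I now fix $\epsilon>0$ small enough that every coefficient of $(\epsilon/m)B$ lies in $[0,1)$; then $\bigl(X,(\epsilon/m)B\bigr)$ is klt. Choose $N\geq 1$ large enough that $NK_X$ is linearly equivalent to an effective divisor, fix such an effective representative, and set $\Lambda=N\NN\subset\ZZ$ with $\Cc=\RR_{\geq 0}$. Define the characteristic system $D\colon\Lambda\to\Div_\QQ^+ X$ by $D(\lambda)=(\lambda/N)(NK_X)$, put $r(\lambda)=\lambda/(1+\epsilon)$, and let $\Delta(\lambda)=(\epsilon/m)A+(\epsilon/m)B$, constant in~$\lambda$ (hence homogeneous of degree~$0$ on $\Cc$). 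Then
\[
r(\lambda)\bigl(K_X+\Delta(\lambda)\bigr)=\frac{\lambda}{1+\epsilon}\bigl(K_X+\epsilon K_X\bigr)=\lambda K_X = D(\lambda),
\]
and the decomposition $\Delta(\lambda)=[(\epsilon/m)A]+[(\epsilon/m)B]$ displays $R(X;D)$ as a klt big adjoint ring meeting all the hypotheses of Theorem~A---superadditivity of $\Mob D$ is automatic from multiplication of sections in the single line bundle $\oo_X(NK_X)$. Theorem~A then yields that $R(X;D)=\bigoplus_{k\geq 0}H^0(X,kNK_X)$ is finitely generated; this is the $N$-th Veronese subring of $R(X,K_X)$, and a standard graded-algebra argument (each $H^0(X,nK_X)$ being finite-dimensional) promotes this to finite generation of $R(X,K_X)$ itself.

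The substantive mathematical content is concentrated entirely in Theorem~A; the remaining difficulty of this corollary is pure bookkeeping---arranging the Kodaira decomposition, log resolution and perturbation of the ample class so that everything becomes an identity of honest $\QQ$-divisors with the desired snc structure, and then verifying the axioms of a klt big adjoint ring. Remark~1.4 moreover absorbs any residual concern about the hypothesis $D(\lambda)\geq 0$.
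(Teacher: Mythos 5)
Your overall plan---use Kodaira's lemma to recognise a Veronese subring of $R(X, K_X)$ as a klt big adjoint ring and invoke Theorem~A---is the intended derivation, and the algebra
\[
r(\lambda)\bigl(K_X+\Delta(\lambda)\bigr)=\frac{\lambda}{1+\epsilon}(1+\epsilon)K_X=\lambda K_X
\]
is fine. The gap is in the effectivity bookkeeping, where you impose two normalisations that contradict one another: first you force $mK_X=A+B$ as an identity of actual $\QQ$-divisors, which pins $K_X$ down as $\tfrac1m(A+B)$; then you ``fix an effective representative'' of $NK_X$, which silently swaps $K_X$ for a different divisor. If $NK_X$ in $D(\lambda)=(\lambda/N)(NK_X)$ literally means $N$ times the $K_X$ from the first normalisation, then $D(\lambda)=\tfrac{\lambda}{m}(A+B)$, and nothing makes this effective: adjusting $A$ to make the identity $mK_X=A+B$ hold does not make $A$ effective (ampleness is purely numerical), so your asserted $D\colon\Lambda\to\Div_\QQ^+X$ is unjustified. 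If instead $NK_X$ denotes the effective representative, then the equation $(\lambda/N)(NK_X)=\lambda K_X$---and with it the whole adjoint decomposition---fails on the nose.

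The cleanest repair is one extra normalisation before the log resolution: replace $A$ by a general member $\tfrac1q A_0$ of $|qA|$ for $q\gg 0$, so that $A$ is in addition effective with small coefficients and still ample; then $K_X:=\tfrac1m(A+B)$ is an effective $\QQ$-divisor, $D(\lambda)=\lambda K_X\geq 0$ is automatic, and your ``fix an effective representative'' step is unnecessary. That this $K_X$ is a $\QQ$-divisor rather than an integral canonical divisor is harmless, because the ring you obtain differs from a Veronese subring of the canonical ring only by the $\QQ$-linear equivalence of characteristic systems discussed in section~2, and finite generation passes along such changes. Alternatively you could drop effectivity and invoke the removal of the hypothesis $D(\lambda)\geq 0$; but that is the remark immediately following Theorem~A, not ``Remark~1.4'' (which is Gordan's lemma), and the paper explicitly defers that removal until Theorems~B and~C are available---so for a direct deduction of the corollary from Theorem~A as stated it is cleaner to manufacture effectivity by hand as above.
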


\begin{rem}
  \label{rem:4}
  In fact, by work of Fujino and Mori \cite{MR1863025}, the results
  here imply the stronger statement that if $X$ is nonsingular, then
  the canonical ring of $X$ is finitely generated. I don't know if
  theorem~\ref{thm:8} can similarly be strengthened: it would be
  \emph{extremely} useful--see \cite{CoLa}--if it could.
\end{rem}

\begin{rem}
  \label{rem:5}
\begin{itemize}
\item This theorem is proved in \cite{Hb}. That proof uses all that is
  known about the minimal model program; in particular, I mention
  \cite{MR1225842,MR1993750,MR2242631,MR2352762,Hb,Hc}.
\item The proof by Lazi\'c is a self-contained induction on dimension
  based on lifting lemmas \cite{MR1660941},
  \cite[Chapter~5]{MR2352762}, \cite{Hc}, etcetera. On the other hand
  it is shown in \cite{CoLa} that theorem~A readily implies all the
  fundamental theorems of Mori theory.
\end{itemize}
\end{rem}

\subsection{The logic of the proof}
\label{sec:logic-proof}

In \cite{lazic09:_towar_mmp} theorem~$A$ is proved by a bootstrap
induction on dimension together with two other theorems that I state
shortly following some preparations.

\subsubsection*{Asymptotic fixed part}
\label{sec:basic-definitions-1}

I summarise some facts on asymptotic invariants of divisors, mostly
following \cite{MR2282673}.

For a projective normal variety $X$, I denote by $\Eff (X;\Rb) \subset
N^1(X;\Rb)$ the cone of (numerical equivalence classes of) effective
divisors with coefficients in $\Rb$, and by $\Effbar (X;\RR) \subset
N^1(X;\RR)$ the cone of \emph{pseudo-effective} divisors, that is, the
closure of the cone of effective divisors (it only makes sense to do
this with real coefficients).  Similarly, I denote by $\Bgg
(X;\Rb)\subset N^1(X; \Rb)$ the cone of big divisors; $\Bgg (X; \RR)$
is the interior of $\Effbar (X;\RR)$.

If $D\in \Div_\QQ X$ is a $\QQ$-divisor, then the \emph{stable base
  locus} of $D$ is the subset
\[
\Bb (D) = \bigcap_{0<p\in \ZZ,\; pD \in \Div_\ZZ X} Bs |p D| \subset X 
\]
(if $|pD| = \emptyset$ for all $0<p\in \ZZ$, then I say by convention
that $\Bb (D)=X$). 

If $D\in \Div_\QQ X$ is a big $\QQ$-divisor, then the \emph{asymptotic fixed
  part} of $D$ is the divisor
\[
\Fb (D)=\inf_{0<n\in \ZZ,\; nD \in \Div_\ZZ X}\frac1{n} \Fix nD \in
\Div_\RR^+ X  .
\]
It is clear that $\Fb(-)$ is a degree $1$ homogeneous convex function of the
numerical equivalence class of $D$; thus, it extends to a degree $1$
homogeneous convex function:
\[
\Fb \colon \Bgg (X; \RR)\to \Div^+_\RR X .
\]

In his book \cite{MR2104208}, Nakayama defines, as follows, a
canonical extension of this function to the closure $\overline{\Bgg}
(X;\RR)=\Effbar (X; \RR)$:
\[
\Fb (D)= \lim_{\varepsilon \to 0^+} \Fb (D+\varepsilon A)
\quad
\text{for}
\quad
D \in \Effbar (X; \RR)
\]
where $A$ is an ample divisor (the definition is independent of the
choice of $A$). A subtle point is that $\Fb (-)$ is continuous on
$\Bgg (X; \RR)$ (because it is convex), but not necessarily on
$\Effbar (X;\RR)$.

The paper \cite{MR2282673} promotes the view that certain asymptotic
invariants defined on $\Effbar (X;\RR)$, for instance the asymptotic
fixed part, are reasonably well-behaved under surprisingly general
conditions. The assertions that follow demonstrate how these
invariants are \emph{much better behaved} on the subcone of \emph{adjoint
  divisors}.

\subsubsection*{General setup}
\label{sec:general-setup}

In what follows, $X$ is a nonsingular projective variety, $A$ an ample
$\QQ$-divisor on $X$, and $\sum_{j=1}^r B_j$ a snc divisor on $X$. I
denote by $\RR^r\cong V\subset \Div_\RR X$ the vector subspace spanned
by the components $B_j$. I write:
\begin{align*}
  \Lc_V&=\{B \in V \mid K_X+B \; \text{is log canonical} \}=
                              \prod_{j=1}^r [0,1]B_j;\\
  \Ec_{V,A}&=\{B \in \Lc_V \mid K_X+A+B \in \Effbar X\}
                                      \subset \Lc_V .
\end{align*}
In addition, if $S$ is a component of
$\sum_{j=1}^r B_j$, I write:
\begin{align*}
  \Bc_{V,A}^S&=\{B \in \Ec_{V,A}\mid S \not \subset \Bb (K_X+A+B)\}
                                    \subset \Ec_{V,A};\\
  \Bc_{V,A}^{S=1}&=\{B=S+B^\prime \in \Ec_{V,A}\mid S \not \subset 
  \Bb (K_X+A+S+B^\prime)\} .
\end{align*}

\subsubsection*{Statements}
\label{sec:statements}

\begin{thm}[Theorem~$B$]
  \label{thm:7}
  $\Bc^{S=1}_{V,A}$ is a rational polytope; moreover:
\[
\Bc^{S=1}_{V,A}= \{B=S+B^\prime \in \Ec_{V,A} \mid \mult_S
\Fb(K_X+A+B)=0\} .
\] \qed
\end{thm}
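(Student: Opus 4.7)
The plan is to first establish the stated equality of sets and then deduce that the common set is a rational polytope.

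\textbf{Equality of the two descriptions.} One inclusion is immediate from the definitions: if $S \not\subset \Bb(K_X+A+B)$, then some positive integer $p$ with $p(K_X+A+B)$ integral satisfies $\mult_S \Fix\bigl(p(K_X+A+B)\bigr)=0$, so $\mult_S \Fb(K_X+A+B)=0$. The reverse inclusion is the substantive half: assuming $\mult_S \Fb(K_X+A+B)=0$, I would invoke an extension/lifting lemma for the pair $(X,B)$. Since $S$ appears in $B$ with coefficient $1$ and $A$ is ample, such a lemma promotes sections of sufficiently divisible multiples of $(K_X+A+B)|_S$ to sections of the corresponding multiples of $K_X+A+B$ that do not vanish along $S$. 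The hypothesis $\mult_S \Fb=0$, combined with Nakayama-style asymptotic arguments on $S$, supplies the required nonzero restricted sections, so $S \not\subset \Bb(K_X+A+B)$.

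\textbf{Rational polyhedrality.} Since $\Fb$ is convex, the function $B \mapsto \mult_S \Fb(K_X+A+B)$ is convex, and its zero locus is therefore a bounded convex subset of $\Lc_V \cap \{B \mid \mult_S B = 1\}$. To upgrade this to rational polyhedrality, I would again restrict to $S$: via the lifting lemma the condition $\mult_S \Fb(K_X+A+S+B')=0$ translates into a statement about the fixed-part structure of an adjoint divisor on $S$, to which the inductive hypothesis (Theorem~$A$, or the analogue of Theorem~$B$, in dimension $\dim S$) applies. This produces a rational PL decomposition of the relevant invariant on $S$; pulling back to $V$ yields finitely many rational affine inequalities cutting out $\Bc^{S=1}_{V,A}$, and Gordan's lemma concludes.

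\textbf{Main obstacle.} The technical heart is the lifting step. Making the extension of sections uniform as $B$ varies through $\Ec_{V,A}$, and controlling the correction terms contributed along $S$ by $B'$ and by the ample part $A$, is the delicate point at which the adjoint and dlt hypotheses are essential. Equally important is the bookkeeping of the bootstrap: Theorem~$B$ in dimension $\dim X$ must be reduced to results available in strictly lower dimension, and organising this reduction, rather than any single estimate, is what allows Lazi\'c's argument to proceed as a clean induction on dimension.
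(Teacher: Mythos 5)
First, a structural note: the paper does not actually prove Theorem~$B$. The leitfaden records the implication $A_{n-1}+C_{n-1}\Rightarrow B_n$, and the author explicitly states that the note only works out the last implication $B_{n-1}^++B_n\Rightarrow A_n$, with the others left aside as ``similar and easier.'' So there is no paper proof to match your sketch against, and what follows evaluates your proposal against the stated bootstrap logic.

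Your outline points in the right direction (restriction to $S$, lifting lemmas, induction on dimension), but there is a concrete gap in the inductive bookkeeping. You invoke only ``Theorem~$A$, or the analogue of Theorem~$B$, in dimension $\dim S$,'' whereas the implication you need is $A_{n-1}+C_{n-1}\Rightarrow B_n$: the nonvanishing statement $C_{n-1}$ is indispensable, and your sketch nowhere supplies its content. In particular, the assertion that ``the hypothesis $\mult_S\Fb=0$, combined with Nakayama-style asymptotic arguments on $S$, supplies the required nonzero restricted sections'' does not hold up. The number $\mult_S\Fb(K_X+A+B)$ is defined for merely pseudo-effective $K_X+A+B$ as a limit $\lim_{\varepsilon\to 0^+}\Fb(K_X+A+B+\varepsilon A)$; its vanishing is a statement about an asymptotic numerical invariant and by itself yields \emph{no} global sections at all, let alone sections of the relevant adjoint divisor on $S$ with which to feed the lifting lemma. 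It is exactly Theorem~$C_{n-1}$ that produces a section on $S$ of some multiple of $K_S+\Omega$ to start the lifting argument, and it is also the mechanism by which the convex set $\{B\in\Ec_{V,A}\mid \mult_S\Fb(K_X+A+B)=0\}$ acquires finitely many rational supporting hyperplanes (your observation that the set is convex is correct but, as you half-acknowledge, convexity alone gives nothing like a rational polytope). Without $C_{n-1}$ both the hard inclusion and the rational polyhedrality claim remain gestures rather than arguments.
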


\begin{thm}[Theorem~$C$]
  \label{thm:9}
  $\Ec_{V,A}$ is a rational polytope. More precisely, $\Ec_{V,A}$ is
  the convex hull of finitely many rational vectors $K_X+A+B_i$
  where, for all $i$: $B_i\geq 0$ is a $\QQ$-divisor, and there is a
  positive integer $p_i> 0$ such that $|p_i(K_X+A+B_i)|\not =
  \emptyset$. \qed
\end{thm}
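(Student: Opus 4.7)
The plan is to prove Theorem C inductively, leveraging Theorem B in the same dimension (and the statement in lower dimensions). Note first that $\Ec_{V,A}\subset \Lc_V$ is closed and convex, as the preimage of the closed convex cone $\Effbar(X;\RR)$ under the continuous affine map $B\mapsto K_X+A+B$. Since $\Lc_V$ is a compact rational polytope, to show $\Ec_{V,A}$ is itself a rational polytope it suffices, by a standard compactness argument, to exhibit a rational polyhedral description of $\Ec_{V,A}$ in a neighbourhood of each of its boundary points.

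The reduction to Theorem B goes through the faces of the hypercube $\Lc_V=\prod_{j=1}^{r}[0,1]B_j$ where some coefficient is pinned at $1$. On such a face, say with $b_{j_0}=1$ for a fixed $j_0$, setting $S=B_{j_0}$ puts us exactly in the situation of Theorem B: the subset $\Bc^{S=1}_{V,A}$ is described, via the identity $\Bc^{S=1}_{V,A}=\{B=S+B':\mult_S\Fb(K_X+A+B)=0\}$, as a rational polytope cut out by rational linear inequalities. Away from such faces, in the interior of $\Lc_V$, I would argue that pseudo-effectivity can be tested by approaching the face stratification and using continuity of $\Fb$ on $\Bgg(X;\RR)$ together with the rational PL structure provided by Theorem B on adjacent faces; in particular, any boundary point of $\Ec_{V,A}$ lies on a face with some coefficient $b_j$ forced to $1$, since otherwise a small perturbation together with the ampleness of $A$ would keep $K_X+A+B$ pseudo-effective. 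Gluing these rational polyhedral descriptions across faces, using convexity of $\Ec_{V,A}$, yields the global rational polytope structure.

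The main obstacle, as I see it, is the second half of the assertion: that the rational vertices $B_i$ of the polytope can be chosen so that $|p_i(K_X+A+B_i)|\neq\emptyset$ for some positive integer $p_i$. Knowing $\Ec_{V,A}$ is a rational polytope gives rational vertices $B_i$ with $K_X+A+B_i$ pseudo-effective, but this is weaker than the existence of sections. To bridge the gap I would argue that each vertex $B_i$ lies at the intersection of several rational facets, and each facet, via Theorem B applied to an appropriate component $S$, imposes the vanishing of $\mult_S\Fb(K_X+A+B_i)$. Combining these constraints over all relevant components forces $\Fb(K_X+A+B_i)=0$, so that $K_X+A+B_i$ itself admits, upon clearing denominators and passing to a positive multiple $p_i$, an effective representative, giving $|p_i(K_X+A+B_i)|\neq\emptyset$. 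The delicate point is to verify that enough facets really do pass through each vertex to pin down every component of $\Fb$; this is where the rational PL structure and the convexity of $\Fb$ on the big cone come into play in an essential way.
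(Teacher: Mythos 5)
Your reduction to Theorem~$B$ hinges on the assertion that every boundary point of $\Ec_{V,A}$ already lies on a face of $\Lc_V$ with some coefficient $b_j=1$, ``since otherwise a small perturbation together with the ampleness of $A$ would keep $K_X+A+B$ pseudo-effective.'' This is where the argument breaks. Ampleness of $A$ does not give you room to move $B$ in all directions inside $\Lc_V$: if $K_X+A+B$ lies on the boundary of $\Effbar(X;\RR)$ (pseudo-effective but not big) with all $0<b_j<1$, then $B$ is a genuine boundary point of $\Ec_{V,A}$ sitting in the interior of $\Lc_V$. Decreasing $B$ in the direction of the negative side of the supporting hyperplane of the pseudo-effective cone at $K_X+A+B$ takes you out of $\Effbar$, and $A$ being ample is of no help since it is fixed. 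This part of $\partial\Ec_{V,A}$ is precisely the hard case: a priori $\Effbar(X;\RR)$ is not rational polyhedral, and the whole content of Theorem~$C$ is that its trace on the slice of adjoint divisors over $\Lc_V$ nevertheless is. Theorem~$B$ tells you about $\Bc^{S=1}_{V,A}$, which lives on the faces $\{b_j=1\}$; it says nothing directly about the piece of $\partial\Ec_{V,A}$ in $\Interior\Lc_V$. Note also that the paper's Leitfaden has $A_{n-1}+C_{n-1}+B_n\Rightarrow C_n$: the proof genuinely uses the induction hypotheses in dimension $n-1$ (e.g.\ to analyse what happens near a nef but non-big adjoint divisor), and your argument, which invokes only $B_n$, cannot see this.

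The second half of your proposal has a separate gap. You infer from a family of constraints $\mult_{B_j}\Fb(K_X+A+B_i)=0$ that $\Fb(K_X+A+B_i)=0$, and from that conclude $|p_i(K_X+A+B_i)|\neq\emptyset$. But the constraints supplied by Theorem~$B$ only kill components of $\Fb$ supported on the chosen $B_j$; $\Fb(K_X+A+B_i)$ may have components entirely outside $V$, and Theorem~$B$ gives no control over those. More seriously, even $\Fb(D)=0$ does not imply $|nD|\neq\emptyset$ for some $n>0$: when $D$ is pseudo-effective but not big, Nakayama's extension $\Fb(D)=\lim_{\varepsilon\to 0^+}\Fb(D+\varepsilon A)$ is defined by a limit from the big cone and can vanish while $|nD|=\emptyset$ for all $n$. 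The effectivity of the vertices is a nonvanishing statement, which is part of what Theorem~$C$ asserts and has to be proved, not read off from vanishing of the asymptotic fixed part.
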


\begin{thm}[Theorem~$B^+$]
  \label{thm:4}
  $\mathcal{B}_{V,A}^S$ is a rational polytope. Furthermore one can
  say the following. There is an integer $r>0$ such that:
  \begin{itemize}
  \item Suppose that $B\in \mathcal{L}_V$ and no component of $B$ is
    in $\Bb (K_X+A+B)$. If $p(K_X+A+B)$ is an integral divisor, then no
    component of $B$ is in $\Fix \bigl(rp(K_X+A+B)\bigr)$;
  \item Suppose that $B\in \mathcal{E}_{V,A}$. If $p(K_X+A+B)$ is
    an integral divisor, then $|rp(K_X+A+B)|\not = \emptyset$.
  \end{itemize}
\end{thm}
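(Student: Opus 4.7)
The plan is to deduce Theorem $B^+$ from Theorems $B$ and $C$ (in the same dimension $n$), combined with Theorem $A$ in dimension $n-1$ applied to the restriction along $S$ and along the other boundary components $B_j$. For the rational polytope property, I use that $B\in\Bc^S_{V,A}$ iff $\Phi(B):=\mult_S\Fb(K_X+A+B)=0$, where $\Phi$ is a nonnegative convex function on the rational polytope $\Ec_{V,A}$ (by Theorem $C$). By Theorem $B$, $\Phi$ vanishes on $\Bc^{S=1}_{V,A}\subset\Ec_{V,A}\cap\{\mult_S B=1\}$, which is already known to be rational polyhedral. I then propagate the rational PL structure of $\Phi$ off this hyperplane by tilting $B$ to $\tilde B=B+(1-\mult_S B)S$, which still lies in $\Ec_{V,A}$. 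The natural inclusion $H^0\bigl(p(K_X+A+B)\bigr)\subseteq H^0\bigl(p(K_X+A+\tilde B)\bigr)$ of subspaces of $k(X)$ identifies the former with sections vanishing to order $\geq p(1-\mult_S B)$ along $S$; combining Theorem $B$ with restriction to $S$ and Theorem $A$ in dimension $n-1$ gives rational PL control of $\Phi$ on all of $\Ec_{V,A}$, so $\Bc^S_{V,A}=\{\Phi=0\}$ is a rational polytope.

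For the uniform $r$, by Theorem $C$ the polytope $\Ec_{V,A}$ has finitely many rational vertices $B^{(i)}$, each equipped with a nonzero section $s_i\in H^0\bigl(X,p_i(K_X+A+B^{(i)})\bigr)$. I set $r$ to be a sufficiently large common multiple of the $p_i$ and of denominators arising from Gordan's lemma applied to $\Ec_{V,A}$. For part (ii), given $B\in\Ec_{V,A}$ rational and $p(K_X+A+B)$ integral, write $B=\sum t_i B^{(i)}$ as a rational convex combination and form the product $\prod s_i^{n_i}$ with $n_i p_i=rpt_i$ nonnegative integers (the choice of $r$ guarantees this): this is a nonzero section of $rp(K_X+A+B)$. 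For part (i), given $B\in\Lc_V$ with no component of $B$ in $\Bb(K_X+A+B)$, I apply Theorem $A$ in dimension $n-1$ to the restricted adjoint ring on each boundary component $B_j$ of $B$. Finite generation there yields uniformly bounded-degree generators on $B_j$; lifting these to $X$ via an extension theorem of Hacon--McKernan type produces sections of $rp(K_X+A+B)$ that do not vanish along $B_j$, so no component of $B$ appears in $\Fix\bigl(rp(K_X+A+B)\bigr)$.

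The hardest step will be part (i) of the uniform $r$ statement: one must coordinate liftings from multiple boundary components $B_j$ simultaneously, so that a single $r$ controls $\mult_{B_j}\Fix\bigl(rp(K_X+A+B)\bigr)$ for every $j$ with $B_j$ a component of $B$. This requires the precise form of the lifting lemma that preserves non-vanishing along each $B_j$, and is where the inductive structure on dimension---invoking Theorem $A$ for the $(n-1)$-dimensional components---plays its decisive role, in contrast to the purely polyhedral arguments from Theorems $B$ and $C$ that suffice for the polytope statement and part (ii).
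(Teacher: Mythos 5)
The paper does not actually prove Theorem $B^+$; the Leitfaden of section~\ref{sec:leitfaden} records the implication $A_n + C_n \Rightarrow B_n^+$ and states that this has a proof similar to, and easier than, the implication $B^+_{n-1} + B_n \Rightarrow A_n$ that is worked out in detail. Your proposal reverses the intended dependency: you try to derive $B^+_n$ from $B_n$, $C_n$ and $A_{n-1}$ using restriction to boundary divisors and lifting. That is essentially the machinery the paper deploys to prove $A_n$ itself (section~\ref{sec:restr-strictly-lt-1}), not $B^+_n$, so the route is either circular in the inductive scheme or re-derives $A_n$ along the way.

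The concrete gap is part (i) of the uniform-$r$ statement. Restricting to each component $B_j$ and invoking $A_{n-1}$ does not, by itself, produce a single $r$ that bounds $\mult_{B_j}\Fix\bigl(rp(K_X+A+B)\bigr)$ uniformly over all $B\in\Lc_V$, all $p$, and all $j$: the lifting lemmas carry hypotheses on $\Bb(K_X+\Delta+A/p)$ and corrections $\Theta,\Phi$ depending on $B$ and $p$, and coordinating these over the whole polytope is precisely the delicate work of section~\ref{sec:restr-strictly-lt-1}, not a black box. The intended argument is far more direct: by $C_n$ the set $\Ec_{V,A}$ is a rational polytope, so one may form the divisorial ring $R(X;\Lambda)$ on the cone over $K_X+A+\Lc_V$; by $A_n$ (with the extra positivity hypothesis removed using $C_n$ as in Remark~\ref{rem:10}) this ring is finitely generated; and then the rational PL structure of $\Mob$, the stabilisation of $\Fix$ at a uniform index, and the polyhedrality of the locus where $S$ is not in the stable base locus are standard consequences of finite generation of a divisorial ring in dimension $n$ itself, going back to Zariski. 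This also disposes of the rational-polytope assertion directly, without the tilting $B\mapsto B+(1-\mult_S B)S$ and without invoking the identification ``$S\not\subset\Bb(K_X+A+B)$ iff $\mult_S\Fb(K_X+A+B)=0$'' on all of $\Ec_{V,A}$: Theorem~$B$ asserts this only on the slice $\mult_S B=1$, and it is genuinely delicate on the boundary of the pseudoeffective cone where $\Fb$ need not be continuous. Your Gordan's-lemma idea for bounding the denominators of the convex coefficients in part (ii) is the correct fix and could be carried out, but as written you only gesture at it.
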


\subsubsection*{The logic of the proof}
\label{sec:leitfaden}

Here is a table of logical dependencies in Lazi\'c's paper where,
e.g., $A_n$ signifies `Theorem~$A$ in dimension $n$:' 

\begin{align}
 A_{n}+C_{n} & \Rightarrow B_n^+  \\ 
 A_{n-1} + C_{n-1} & \Rightarrow B_n \\
 A_{n-1} + C_{n-1}+B_n & \Rightarrow C_n \\
 B_{n-1}^+ + B_n & \Rightarrow A_n  
\end{align}

In this note, I outline all the key steps and ideas in the proof
of the last implication $B_{n-1}^+ +B_n \Rightarrow A_n$,
stopping somewhat short of a complete proof. The other implications have
similar and easier proofs. For a direct analytic proof of a weaker
form of theorem~$C$, see also \cite{Paun}.

\subsection{The key ideas of the proof}
\label{sec:key-ideas-proof}

The proof is based on two key ideas that I explain in a bit more
detail below and then fully in the rest of the note. The first is to
prove finite generation of strictly dlt rings (see below) by
restriction to a boundary divisor using lifting lemmas and induction
on dimension. The second idea is what I call below the ``main
construction.''  Starting with a klt big adjoint ring with
characteristic system $D\colon \Cc \to \Div_\RR X$, I inflate the cone
$\Cc$ and semigroup $\Lambda$ to a larger cone $\Cc^\prime$ and
semigroup $\Lambda^\prime$, and then ``chop'' into finitely many
smaller $\Cc_j$ and $\Lambda_j$ such that the rings $R(X; \Lambda_j)$
are strictly dlt. This is a version of constructions that are
ubiquitous in the proofs of all the fundamental theorems of Mori
theory. Finite generation of $R(X; \Lambda)$ follows easily from
finite generation of the $R(X;\Lambda_j)$.

\subsubsection*{Restriction of strictly dlt rings}
\label{sec:restr-strictly-lt}

\begin{dfn}
  \label{dfn:9}
I say that a dlt big adjoint ring $R(X; D)$ with characteristic
system
\[
D(\lambda)=r(\lambda)\bigl(K_X+\Delta(\lambda)\bigr), 
\quad
\text{where}
\quad
\Delta(\lambda)=A+B(\lambda)
\]
is \emph{strictly dlt} if there is a prime divisor $S$ that
appears in all $B(\lambda)$ with multiplicity one:
\[
B(\lambda)=S+B^\prime(\lambda).
\]

I say that $R(X;D)$ is \emph{plt} if it is strictly dlt and all
$\bigl(X,S+B(\lambda)\bigr)$ are plt.    
\end{dfn}

When $R(X;D)$ is a strictly dlt adjoint ring, it is natural to
want to study the restriction homomorphisms:
\[
\rho_\lambda \colon H^0\Bigl(X;r(\lambda)\bigl(K_X+A+S+B(\lambda)\bigr) 
\Bigr) \to H^0\Bigl(S;r(\lambda)\bigl(K_S+\Omega(\lambda)\bigr)\Bigr) .
\]
where $\Omega (\lambda)= \bigl(A+B(\lambda)\bigr)_{|S}$.
The $\rho_\lambda$ are not surjective, but, with small additional
assumptions, lifting lemmas give us a good control on the images.

\begin{thm}
  \label{thm:12}
  Assume theorems~$B_{n-1}^+$ and~$B_n$; let $\dim X=n$ and let $R(X; D)$
  be a strictly dlt big adjoint ring on $X$. The \emph{restricted ring:}
\[
R_S(X;D)=\sum_{\lambda\in \Lambda} \Image (\rho_\lambda) \subset R(S; D(\lambda)_{|S})
\]
is a klt big adjoint ring.
\end{thm}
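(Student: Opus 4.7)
The plan is to identify $R_S(X;D)$ with a divisorial ring on $S$ whose characteristic system is explicit enough to verify the klt big adjoint conditions. The main engine is a Hacon--McKernan-style lifting lemma, which describes the images $\Image(\rho_\lambda)$ as $H^0$'s of explicit divisors on $S$.

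First, by adjunction on the snc pair $(X, S+A+B(\lambda))$, one has $(K_X + S + A + B(\lambda))_{|S} = K_S + \Omega(\lambda)$, so the target of $\rho_\lambda$ has the natural adjoint form $r(\lambda)(K_S + \Omega(\lambda))$. Let $\Phi(\lambda)$ denote the component on $S$ of $\Fix|r(\lambda)(K_X + S + A + B(\lambda))|$; trivially
\[
\Image(\rho_\lambda) \subseteq H^0\bigl(S;\, r(\lambda)(K_S + \Omega(\lambda)) - \Phi(\lambda)\bigr),
\]
and the content of the lifting lemma is that this containment is an equality. Setting $E(\lambda) := r(\lambda)(K_S + \Omega(\lambda)) - \Phi(\lambda)$ gives the prospective characteristic system; what remains is to show that $E$ is rational PL and can be rewritten in klt big adjoint form.

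Second, I would invoke the inductive hypotheses to control $\Phi$. Theorem $B_n$ identifies the rational polytope $\Bc^{S=1}_{V,A} \subset \Ec_{V,A}$ on which $S \not\subset \Bb(K_X+A+S+B(\lambda))$; outside this polytope $\Image(\rho_\lambda) = 0$, so only $\Bc^{S=1}_{V,A}$ matters. On it, the lifting lemma identifies $\Phi(\lambda)$ with (the truncation against $r(\lambda)\Omega(\lambda)$ of) the asymptotic fixed part $r(\lambda)\Fb(K_X + S + A + B(\lambda))_{|S}$. Theorem $B_{n-1}^+$, applied to the pair $(S, \Omega(\lambda))$ whose boundary components lie in the snc divisor $\sum_j (B_j)_{|S}$, then supplies a uniform integer $r$ such that this asymptotic fixed part is already detected by the ordinary fixed part of $|rp \cdot E(\lambda)|$; together with the rational polyhedral structure of $\Bc^{S=1}_{V,A}$ from $B_n$, this realises $\Phi$, and hence $E$, as rational PL on a finite rational subdivision. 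This uniform family version of the lifting lemma is the main technical obstacle and the genuine use of both $B_n$ and $B_{n-1}^+$.

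Finally, once $E$ is rational PL, the klt big adjoint structure follows by a standard ample perturbation using $A_{|S}$ as a reservoir: a small fraction $(1-\epsilon)A_{|S}$ serves as the new ample divisor $A_S$, while the remaining $\epsilon A_{|S}$, represented by a general effective $\QQ$-divisor via Bertini, is absorbed together with $B(\lambda)_{|S} - \tfrac{1}{r(\lambda)}\Phi(\lambda)$ into a new boundary $B_S(\lambda) \geq 0$. The subtraction of $\Phi$ is what kills the potentially log canonical (coefficient-one) components of $B(\lambda)_{|S}$, yielding a klt pair $(S, B_S(\lambda))$ for each $\lambda$. Superadditivity of the mobile part is tautological from $\Image(\rho_{\lambda_1}) \cdot \Image(\rho_{\lambda_2}) \subseteq \Image(\rho_{\lambda_1+\lambda_2})$, completing the verification that $R_S(X;D)$ is a klt big adjoint ring.
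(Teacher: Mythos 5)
Your overall strategy matches the paper's: use lifting lemmas to identify $\Image(\rho_\lambda)$ with $H^0$ of an explicit divisor $r(\lambda)(K_S+\Theta(\lambda))$, use Theorem~$B_n$ to restrict attention to the subcone where $S$ is not in the stable base locus, and use Theorem~$B_{n-1}^+$ plus an ample perturbation to obtain the klt big adjoint structure. However, there are two genuine gaps.

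First, a notational confusion that matters: you work with $\Fb(K_X+S+A+B(\lambda))_{|S}$, the restriction to $S$ of the asymptotic fixed part computed on $X$. The paper's lifting lemmas involve a different object, the \emph{restricted} asymptotic fixed part $\Fb_S(K_X+\Delta)$, which the paper explicitly warns (Remark~\ref{rem:9}) is distinct from $\Fb(K_X+\Delta)_{|S}$ and from $\Fb\bigl((K_X+\Delta)_{|S}\bigr)$. The lifting lemma controls $\Fb_S$, not $\Fb(\cdot)_{|S}$, and the whole argument hinges on working with the right one.

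Second, and more seriously, the step you flag as ``the main technical obstacle'' --- deducing that the characteristic system $E$, equivalently $\Theta(\lambda)=\Omega(\lambda)-\Omega(\lambda)\wedge\Fb_S(\lambda)$, is rational PL --- is not supplied, and the reasons you give do not suffice. Theorem~$B_{n-1}^+$ hands you a uniform $r$ such that certain fixed parts stabilize, and $B_n$ gives you a rational polytope where $S\not\subset\Bb(\cdot)$, but neither of these directly yields piecewise affineness of $\Theta$. What actually does the work in the paper is (i) Lemma~\ref{lem:6}, an application of tinkering lifting showing that each individual $\Theta(\lambda)$ is rational; and (ii) Lemma~\ref{lem:3}, a Diophantine approximation argument: one approximates a real point $\xb$ by rational points $\wb_i$, proves a ``key inclusion'' via sharp lifting for each $\wb_i$, and then concludes from $\Theta_i\leq\Theta(\wb_i)$ and $\sum\mu_i\Theta_i=\Theta(\xb)$ that $\Theta$ is affine --- \emph{but this final step uses concavity of $\Theta$ near $\xb$}. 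Concavity is not automatic, because $\Phi=\Omega\wedge\Fb_S$ is a minimum of a linear and a convex function; the paper has to assume $\mult_P\Omega(\xb)\neq\mult_P\Fb_S(\xb)$ for all $P$ and then separately explain how to remove that assumption by perturbing $A$. None of this structure appears in your sketch, and without concavity the conclusion from the Diophantine approximation does not follow. You also do not note that the conclusion holds only after passing to a birational model $T\to S$ (Remark~\ref{rem:11}), which is forced by the reductions needed to apply sharp lifting.
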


\begin{rem}
  \label{rem:11}
  In fact, $R_S(X;D)$ is a klt big adjoint ring not on $S$, but on some
  birational model $T\to S$. This technicality is relevant in
  the proof of the theorem, but it is otherwise unimportant. 
\end{rem}
 
The techniques of Lazi\'c's proof of theorem~\ref{thm:12} are subtle
but generally well understood by the experts.  I sketch the key ideas
in section~\ref{sec:restr-strictly-lt-1} below.

\subsubsection*{The main construction}
\label{sec:reduct-strictly-lt}

In section~\ref{sec:reduct-strictly-lt-1}, I give a complete proof
that theorem~\ref{thm:12} implies theorem~$A$. Roughly speaking, here
is the outline: I want to show that a given dlt adjoint ring
$R(X;\Lambda)$, where $\Lambda=\Cc \cap \ZZ^r\subset \RR^r$,
satisfying the additional assumption of theorem~$A$, is finitely
generated.

First, I inflate $\Cc$ to a larger cone $\Cc \subset \Cc^\prime
\subset \RR^r$ and extend $D\colon \Lambda \to \Div^+_\QQ X$ to an
appropriate $D^\prime \colon \Lambda^\prime = \Cc^\prime \cap \ZZ^r
\to \Div^+_\QQ X$.

Next, I construct a decomposition into subcones:
\[
\Cc^\prime = \cup_{j=1}^r \Cc_j
\] 
such that, for all $j=1,\dots, r$, writing:
\[
D_j=D^\prime_{|\Lambda_j}\colon \Lambda_j=\Cc_j \cap \ZZ^r \to \Div^+_\QQ ,
\]
the ring:
\[
R_j=R(X;\Lambda_j) 
\quad
\text{is strictly dlt with}
\quad
D_j(\lambda)=r_j(\lambda)\bigl( B_j+B_j^\prime (\lambda)\bigr).
\]

Finally, each of the $R_j$ has a surjective restriction homomorphism to a
restricted ring:
\[
\rho_j \colon R_j(X;\Lambda_j)\to R_{B_j}(X;\Lambda_j),
\]
and a relatively straightforward argument shows, assuming---as I
may by theorem~\ref{thm:12} and induction on dimension---that the
restricted rings $R_{B_j}(X;\Lambda_j)$ are finitely generated, that the
ring $R(X;\Lambda^\prime)$ also is finitely generated, and then
ultimately so is the ring $R(X;\Lambda)$. 

The construction is explained in detail in
section~\ref{sec:reduct-strictly-lt-1}.

\subsection*{Acknowledgements}
\label{sec:acknowledgements}

I thank Paolo Cascini, J\"urgen Hausen, Anne-Sophie Kaloghiros, Vlad
Lazi\'c, James M\textsuperscript{c}Kernan and the referee for valuable
comments and suggestions.

\section{Natural operations with divisorial rings}
\label{sec:natural-operations}

I briefly discuss the behaviour of divisorial and adjoint rings under
natural operations. These properties are elementary and mostly
well-known \cite{MR2207875, ivan0}.

\subsection{Veronese subrings}
\label{sec:veronese-subrings}

\begin{dfn}
  \label{dfn:6}
  If $R=\oplus_{\lambda\in \Lambda}R_\lambda$ is a $\Lambda$-graded
  ring (e.g., $R$ could be a divisorial ring), $\LL \subset \ZZ^r$ is a
  finite index subgroup and $\Lambda^\prime = \Lambda \cap \LL$, then
  I say that 
\[
R^\prime = \bigoplus_{\lambda \in \Lambda^\prime} R_\lambda
\subset R
\]  
  is a \emph{Veronese subring.}
\end{dfn}

\begin{rem}
  \label{rem:7}
  If $R^\prime \subset R$ is a Veronese subring, then $R$ is finitely
  generated if and only if $R^\prime$ is. Indeed, $R^\prime\subset R$
  is the ring of invariants under the action of the finite group
  $\GG=\ZZ^r/\LL$, so the statement is a special case of a well-known
  theorem of E.\ Noether.
\end{rem}

\subsection{Inflating}
\label{sec:chopping-inflating}

A more general version of the next lemma can be found in
\cite[Proposition~1.1.6]{ivan0}. 

\begin{lem}
  \label{lem:2}
  Consider an inclusion of finite rational cones: 
\[
\Cc \subset \Cc^\prime \subset \RR^r ;
\] 
and write $\Lambda^\prime=\Cc^\prime \cap \ZZ^r$, $\Lambda=\Cc \cap
\ZZ^r$.  Let $R^\prime =\oplus_{\lambda\in \Lambda^\prime}
R^\prime_\lambda$ be a $\Lambda^\prime$-graded ring, and write
\[
R = \bigoplus_{\lambda\in \Lambda}R^\prime_\lambda .
\]
If $R^\prime$ is finitely generated, then so is $R$.
\end{lem}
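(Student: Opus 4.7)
The plan is to use a homogeneous set of generators of $R'$ to reduce the question to a problem about finitely generated monoids, then apply Gordan's lemma (Lemma~\ref{lem:8}).

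First I would choose homogeneous generators $g_1,\dots,g_m \in R'$ of degrees $e_1,\dots,e_m \in \Lambda^\prime$; these exist because $R'$ is finitely generated and $\Lambda^\prime$-graded. Every element of $R^\prime_\lambda$ is a $k$-linear combination of monomials $g^a = g_1^{a_1}\cdots g_m^{a_m}$ with $a \in \ZZ_{\geq 0}^m$ and $\sum_i a_i e_i = \lambda$. Since $R \subset R'$ is the graded subring supported on $\Lambda = \Cc \cap \ZZ^r$, it is enough to show that the sub-monoid
\[
S = \Bigl\{ a \in \ZZ_{\geq 0}^m \;\Big|\; \textstyle\sum_i a_i e_i \in \Cc \Bigr\} \subset \ZZ_{\geq 0}^m
\]
is finitely generated as a monoid: if $a^{(1)},\dots,a^{(k)}$ generate $S$, then the monomials $g^{a^{(1)}},\dots,g^{a^{(k)}}$ generate $R$ as a $k$-algebra.

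Next I would observe that $S$ is exactly the intersection with $\ZZ^m$ of the cone
\[
\widetilde{\Cc} = \phi^{-1}(\Cc) \cap \RR_{\geq 0}^m \subset \RR^m,
\]
where $\phi \colon \RR^m \to \RR^r$ sends $a \mapsto \sum_i a_i e_i$. The cone $\phi^{-1}(\Cc)$ is cut out in $\RR^m$ by finitely many rational linear inequalities (pulled back from the supporting half-spaces of the finite rational cone $\Cc$), so $\widetilde{\Cc}$ is a rational polyhedral cone; by Minkowski--Weyl it is therefore a finite rational cone in $\RR^m$. Gordan's lemma (Lemma~\ref{lem:8}) now says that $S = \widetilde{\Cc} \cap \ZZ^m$ is a finitely generated monoid, which completes the argument.

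There is no real obstacle: the only substantive point is that the preimage of a finite rational cone under a rational linear map, intersected with the positive orthant, is again a finite rational cone. Once that is stated, the rest is bookkeeping. The same argument also makes transparent why one does not need $R'$ itself to be a divisorial ring, nor any hypothesis about the base field: finite generation of $R$ is a purely combinatorial consequence of finite generation of $R'$ together with the containment $\Cc \subset \Cc^\prime$.
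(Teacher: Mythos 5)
Your proof is correct, and it takes a genuinely different route from the paper's. The paper reduces to cutting $\Cc$ out of $\Cc'$ by one rational hyperplane at a time, reads the hyperplane as a $\ZZ$-grading, and then observes that the non-negatively graded part $R'_+ \subset R'$ is the ring of $\CC^\times$-invariants of the Rees-type algebra $R'[z]$, invoking finite generation of invariants of a reductive group. You instead pick homogeneous generators of $R'$, reduce at once to showing that the exponent monoid $S = \phi^{-1}(\Cc) \cap \ZZ_{\geq 0}^m$ is finitely generated, and deduce that from Minkowski--Weyl and Gordan's lemma (Lemma~\ref{lem:8}). The two arguments reach the same end, but yours is more elementary and self-contained: it avoids invariant theory entirely, uses only Gordan's lemma which the paper has already declared it will use without warning, works over an arbitrary base ring, and makes the finite set of new generators $g^{a^{(1)}},\dots,g^{a^{(k)}}$ visibly explicit. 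The paper's GIT argument is shorter if one already has linear reductivity of $\CC^\times$ in hand, and handles the hyperplanes one at a time, which is cosmetically closer to how this lemma is usually stated for $\ZZ$-gradings. One small thing worth making explicit in your write-up: the containment $\Cc \subset \Cc'$ is used precisely to ensure $\Lambda \subset \Lambda'$ so that $R$ is a well-defined graded subring of $R'$; once that is noted, everything in your argument is in place.
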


\begin{proof}
  This is elementary and well-known, so I only give a very brief sketch
  of the proof. The cone $\Cc \subset \Cc^\prime$ is cut out by finitely
  many rational hyperplanes; working one hyperplane at a time, I may
  assume that
\[
\Cc = \{\wb\in \Cc^\prime \mid f(\wb)\geq 0\},\quad
\text{for a linear map}\;
f\colon \RR^r \to \RR
\]
with $f(\ZZ^r)\subset \ZZ$. Now the $\Lambda^\prime$-grading on
$R^\prime$ means that $R^\prime$ has a $\TT=\CC^{\times\,r}$-action,
and $f\colon \ZZ^r\to \ZZ$ corresponds to a $1$-parameter $\CC^\times
\to \TT$, in turn endowing $R^\prime$ with a $\ZZ$-grading, and then,
tautologically:
\[
R = R^\prime_+ = \bigoplus_{n\geq 0} R^\prime_n .
\]
Now $R^\prime_+ \subset R^\prime$ is finitely generated, because it is
the subring of invariants for the action of the reductive group
$\CC^\times$ on
\[
R^\prime[z]=\bigoplus_{n\geq 0}z^nR^\prime=\bigoplus_{n\in \ZZ_{\geq
    0},\; m\in \ZZ} z^nR^\prime_m
\] 
acting on $z^nR^\prime_m$ with weight $-n+m$. 
\end{proof}

\subsection{Injective characteristic systems}
\label{sec:inject-char-syst}

In general, the characteristic system $D\colon \Lambda \to \Div_\QQ X$
of a divisorial ring is not injective.

\begin{lem}
  \label{lem:9}
  Consider a characteristic system $D\colon \Lambda \to \Div_\QQ X$
  where $\Lambda =\Cc \cap \ZZ^r$ for a finite rational cone $\Cc
  \subset \RR^r$. Assume that $D$ is the restriction of a rational
  linear function, still denoted $D\colon \RR^r \to \Div_\RR
  X$. Write $\overline{\Cc} =D(\Cc)\subset \Div_\QQ X$, the image of
  $\Cc$ under $D$, and $\overline{\Lambda}=\overline{\Cc}\cap \Div_\ZZ
  X\subset \Div_\ZZ X$. Then $R(X;\Lambda)$ is finitely generated if
  and only if $R(X; \overline{\Lambda})$ is finitely generated.
\end{lem}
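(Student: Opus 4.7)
The plan is to relate $R(X;\Lambda)$ and $R(X;\overline{\Lambda})$ through an intermediate, inflated ring that factors as a tensor product of $R(X;\overline{\Lambda})$ with a Laurent polynomial ring. As a preliminary reduction, I use that Veronese subrings preserve finite generation (Remark~\ref{rem:7}): by Gordan's lemma applied to $\overline{\Lambda}$ I produce rational preimages in $\Cc$ of a finite generating set, clear denominators, and simultaneously replace $\Lambda$ by an appropriate $N\Lambda$ so that $D(\ZZ^r) \subset \Div_\ZZ X$. This lets me assume that $D$ restricts to a surjection $\Lambda \twoheadrightarrow \overline{\Lambda}$, i.e. $D(\Lambda) = \overline{\Lambda}$.

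For the forward direction, I exhibit a natural graded ring surjection $\phi\colon R(X;\Lambda) \twoheadrightarrow R(X;\overline{\Lambda})$: on each graded piece $\phi$ is the identity map $H^0(X, D(\lambda)) \to H^0(X, D(\lambda))$ from $R(X;\Lambda)_\lambda$ to $R(X;\overline{\Lambda})_{D(\lambda)}$. Compatibility with multiplication is automatic because $D$ is additive and both products are the natural one on sections; surjectivity follows from the reduction $D(\Lambda) = \overline{\Lambda}$. Hence finite generation of $R(X;\Lambda)$ forces finite generation of $R(X;\overline{\Lambda})$.

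For the converse I inflate. Let $K = \Ker D \cap \ZZ^r$ and $\Cc' = \Cc + \Ker D$: this is a finite rational cone (its generators are those of $\Cc$ together with $\pm$ a $\ZZ$-basis of $K$) and one checks directly that $\Cc' = D^{-1}(\overline{\Cc})$. Write $\Lambda' = \Cc' \cap \ZZ^r \supset \Lambda$; by Lemma~\ref{lem:2} it suffices to show $R(X;\Lambda')$ is finitely generated. Because $D(\ZZ^r) \subset \Div_\ZZ X$ is torsion-free, the short exact sequence $0 \to K \to \ZZ^r \to D(\ZZ^r) \to 0$ splits; fix an additive section $s\colon D(\ZZ^r) \to \ZZ^r$. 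Since $D \circ s = \mathrm{id}$, we have $s(\overline{\Lambda}) \subset D^{-1}(\overline{\Cc}) \cap \ZZ^r = \Lambda'$, and the assignment $\lambda' \mapsto (D(\lambda'),\, \lambda' - s(D(\lambda')))$ is a monoid isomorphism $\Lambda' \xrightarrow{\sim} \overline{\Lambda} \oplus K$ with inverse $(\mu, k) \mapsto s(\mu) + k$. The graded piece indexed by $(\mu, k)$ is $H^0(X, \mu)$, independent of $k$, which yields a graded ring isomorphism
\[
R(X;\Lambda') \;\cong\; R(X;\overline{\Lambda}) \otimes_{\CC} \CC[K].
\]
The second factor is a Laurent polynomial ring in $\mathrm{rank}(K)$ variables, manifestly finitely generated, so finite generation of $R(X;\overline{\Lambda})$ implies finite generation of $R(X;\Lambda')$, hence of $R(X;\Lambda)$.

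The main obstacle is the preliminary Veronese reduction: the lattices $D(\ZZ^r)$ and $\Div_\ZZ X \cap \mathrm{span}(\overline{\Cc})$ are a priori only commensurable inside $\Div_\QQ X$, and arranging a single Veronese large enough to realize $D(\Lambda) = \overline{\Lambda}$ requires careful bookkeeping with both gradings simultaneously. Once this reconciliation is in place, the rest is a direct tensor-product computation combined with Lemma~\ref{lem:2}.
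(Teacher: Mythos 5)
Your overall plan---inflate $\Cc$ along $\Ker D$ so that the characteristic system factors as a product with a monoid algebra, and align the two gradings by Veronese---is the right one and is presumably what the paper means by ``a simple application of all the above.'' But the preliminary reduction as stated has a genuine gap. Replacing $\Lambda$ by $N\Lambda$ (or by $\Lambda\cap\LL$ for any finite-index $\LL\subset\ZZ^r$) only \emph{shrinks} $D(\Lambda)$ and leaves $\overline{\Lambda}=\overline{\Cc}\cap\Div_\ZZ X$ entirely unchanged, so no such Veronese can make $D\colon\Lambda\to\overline{\Lambda}$ surjective. Consequently the surjection $\phi\colon R(X;\Lambda)\twoheadrightarrow R(X;\overline{\Lambda})$ on which your forward direction rests is not available, and in the converse direction the ``section'' $s$ is not defined on all of $\overline{\Lambda}$ (only on $D(\ZZ^r)$, which is merely commensurable with $\Div_\ZZ X$ on $\mathrm{span}(\overline{\Cc})$). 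You flag this as ``the main obstacle'' but the proposed fix does not resolve it.

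The repair is to give up on $D(\Lambda)=\overline{\Lambda}$ and instead compare both rings to a common Veronese. Let $\mu_1,\dots,\mu_s$ generate $\overline{\Lambda}$; since each $\mu_i\in D(\Cc)(\QQ)$, the rational polyhedron $D^{-1}(\mu_i)\cap\Cc$ contains a rational point, and after clearing denominators one finds $N>0$ with $N\overline{\Lambda}\subset D(\Lambda)\subset\overline{\Lambda}$. One then checks directly that $N\overline{\Lambda}=D(\Lambda)\cap N\Div_\ZZ X$, so $R(X;N\overline{\Lambda})$ is a Veronese subring of the image $R'=\phi\bigl(R(X;\Lambda)\bigr)=\bigoplus_{\mu\in D(\Lambda)}H^0(X;\mu)$ and also of $R(X;\overline{\Lambda})$. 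This gives the forward direction: if $R(X;\Lambda)$ is finitely generated, then so is $R'$, hence $R(X;N\overline{\Lambda})$, hence $R(X;\overline{\Lambda})$. For the converse your inflation and tensor decomposition $R(X;\Lambda')\cong R(X;\overline{\Lambda}')\otimes_\CC\CC[K]$ is sound once one first Veroneses on \emph{both} sides using the lattice $L=D(\ZZ^r)\cap\Div_\ZZ X$ (so that $s$ is defined where it needs to be), and then feeds the result into Lemma~\ref{lem:2}; you should state this two-sided Veronese explicitly rather than the single $\Lambda\rightsquigarrow N\Lambda$.
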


\begin{proof}
  A simple application of all the above. 
\end{proof}

\subsection{$\QQ$-linear equivalence}
\label{sec:qq-line-equiv}

\begin{dfn}
  \label{dfn:7}
  Let $X$ be a projective normal variety. Denote by $\Div^0_\Rb
  X\subset \Div_\Rb X$ the subgroup of divisors that are
  $\Rb$-linearly equivalent to $0$. Two characteristic systems on $X$:
\[
D\colon \Lambda \to \Div_\QQ X
\; \text{and}
\quad
D^\prime \colon \Lambda \to \Div_\QQ X
\]
 are $\QQ$-\emph{linearly equivalent} if there exists an additive map
 $\divi \colon \Lambda \to \Div_\QQ^0 X$ such that
\[
D(\lambda)=D^\prime (\lambda) + \divi (\lambda)
\quad \text{for all}\;
\lambda \in \Lambda.
\]
\end{dfn}

\begin{rem}
  \label{rem:8}
  If $D$ and $D^\prime$ are $\QQ$-linearly equivalent, then $R(X;D)$
  and $R(X;D^\prime)$ have isomorphic Veronese subrings. In
  particular, one is finitely generated if and only if the other is. 
  
  In some arguments, I use this device to replace the ample
  $\QQ$-divisor $A$ by a $\QQ$-linearly
  equivalent $\QQ$-divisor $A^\prime$ such that 
  $A^\prime$ is ``general,'' in the sense that $A^\prime\geq 0$, the
  coefficients of $A^\prime$ are as small as I care for them to be,
  and $A^\prime $
  meets every divisor and locally closed locus in sight as generically
  as possible.
\end{rem}

\begin{lem}
  \label{lem:4}
  Let $X$ be nonsingular projective and $R(X;D)$ a big adjoint ring on $X$.
  \begin{itemize}
  \item If $R(X;D)$ is dlt, then there exists a $\QQ$-linearly
    equivalent system $D^\prime$ such that $R(X;D^\prime)$ is a klt big
    adjoint ring.
  \item If $R(X;D)$ is strictly dlt, then there exists a $\QQ$-linearly
    equivalent system $D^\prime$ such that $R(X;D^\prime)$ is a plt big
    adjoint ring.
  \end{itemize}
\end{lem}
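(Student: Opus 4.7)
The plan exploits $\QQ$-linear equivalence in two complementary ways: (a) by the device of Remark~\ref{rem:8}, the ample divisor $A$ can be replaced by any $\QQ$-linearly equivalent effective representative $A'\sim_\QQ A$ in very general position with arbitrarily small coefficients; and (b) the characteristic system can be modified by an additive $\QQ$-linearly trivial correction $\divi g_\lambda\in\Div_\QQ^0 X$, additive in $\lambda\in\Lambda$.

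For the first bullet (dlt to klt), the starting observation is that for small $\varepsilon>0$, the class $[A]-\varepsilon\sum_j[B_j]$ stays in the open ample cone, so for $m$ large $|mA|$ is very ample and admits a general smooth $H$ with $H+\sum_{j=1}^r B_j$ snc. Using $A\sim_\QQ\tfrac{1}{m}H$, I split the ample piece and rewrite
\[
D(\lambda) \;\sim_\QQ\; r(\lambda)\Bigl(K_X + (1-\varepsilon)A + \tfrac{\varepsilon}{m}H + B(\lambda)\Bigr),
\]
so that $A'':=(1-\varepsilon)A$ is ample and the new boundary lives on the enlarged snc divisor $H+\sum B_j$ with small coefficient on $H$. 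A naive shift $B''(\lambda):=B(\lambda)+\varepsilon\sum B_j$ against $A'':=A-\varepsilon\sum B_j$ would preserve effectiveness but raise coefficients past $1$, so instead I combine the Bertini replacement with a uniform rescaling $B(\lambda)\mapsto(1-\delta)B(\lambda)$: the resulting ``excess'' $\delta B(\lambda)$, together with a further portion of the ample class, is represented by a general effective $\QQ$-divisor with coefficients strictly less than $1$, using additional general sections of very ample multiples of $A$. The final system has all boundary coefficients in $[0,1)$, hence is klt.

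For the second bullet (strictly dlt to plt), I apply the same construction to the boundary $B(\lambda)-S$ while keeping the distinguished prime $S$ untouched at coefficient exactly $1$. The scaling factor $(1-\delta)$ and the general-position replacements are only applied off $S$, so that after the perturbation $S$ retains coefficient $1$ and every other coefficient is strictly less than $1$; this is exactly the plt condition.

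The main obstacle is ensuring the Bertini-type replacements and rescalings can be carried out \emph{uniformly in} $\lambda$, so that the correction $\divi g_\lambda$ is additive in $\lambda\in\Lambda$ and the resulting characteristic system is still rational piecewise linear. This is handled by Gordan's lemma (Lemma~\ref{lem:8}): $\Lambda$ is finitely generated, so we may specify $g_{\lambda_i}$ on a finite generating set of $\Lambda$ and extend multiplicatively; the additivity guarantees that the modified characteristic system remains a rational PL map $\Cc\to\Div_\RR X$, compatible with the graded ring structure.
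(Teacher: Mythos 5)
Your overall strategy -- absorb a small amount of the coefficient-one part of $B(\lambda)$ into the ample divisor via a $\QQ$-linear equivalence -- is exactly the idea behind the paper's proof. But two things are glossed over, and the second one is the real crux.

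First, the uniform rescaling $B(\lambda)\mapsto(1-\delta)B(\lambda)$ produces an excess $\delta B(\lambda)$ whose numerical class varies with $\lambda$, so absorbing it ``together with a further portion of the ample class'' would require peeling off a $\lambda$-dependent amount $\eta(\lambda)A$. The definition of a big adjoint ring demands a \emph{single} ample $\QQ$-divisor $A'$ with $\Delta'(\lambda)=A'+B'(\lambda)$, so you cannot let the ample part float. The paper sidesteps this by rescaling only the coefficient-one components $S_i$ (keeping the genuinely klt part of $B(\lambda)$ untouched) and fixing once and for all a single $\QQ$-linearly trivial perturbation $D\sim_\QQ 0$, then setting $A'=A-D$.

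Second, and more seriously, ``specify $g_{\lambda_i}$ on a finite generating set of $\Lambda$ and extend multiplicatively'' is precisely what is \emph{not} automatic, and it is the point the paper's worked example is designed to illustrate. The correction $\divi\colon\Lambda\to\Div^0_\QQ X$ must be additive, and the Bertini representatives chosen at the different generators must be reconciled so that their differences lie in $\Div_\QQ^0 X$. Concretely, in the paper's $\NN^2$ example one writes $NA\sim S_1+T_2\sim S_2+T_1$ and then exhibits a rational function $\varphi$ with
\[
-S_1+T_1 \;=\; -S_2+T_2+\divi_X\varphi,
\]
so that the single fixed $D=-\tfrac{\varepsilon}{N}S_1+\tfrac{\varepsilon}{N}T_1$ can also be written $D=-\tfrac{\varepsilon}{N}S_2+\tfrac{\varepsilon}{N}T_2+\tfrac{\varepsilon}{N}\divi_X\varphi$. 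This gives $\divi(\mathbf{e}_1)=0$ and $\divi(\mathbf{e}_2)=\tfrac{\varepsilon}{N}\divi_X\varphi$, and the additive extension then produces a klt boundary at every lattice point because convex combinations of klt boundaries on a fixed snc support are klt. Your proposal never constructs these reconciling functions, so the claim that the extension is well-defined and klt is a genuine gap -- Gordan's lemma gives you a finite generating set, but it does not by itself produce an additive $\divi$ compatible with your ad hoc Bertini choices.

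Finally, a minor remark: the observation that $[A]-\varepsilon\sum_j[B_j]$ is ample plays no role in what follows; the existence of a general smooth $H\in|mA|$ with $H+\sum B_j$ snc only needs $A$ ample.
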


\begin{proof}[Sketch of Proof] The idea is, of course, to ``absorb''
    into $A$ a small amount of $B(\lambda)$ where it has coefficient $1$.
    I briefly discuss a very special case that illustrates the key 
    issue. Assume that $\Lambda = \NN^2= \NN
    \mathbf{e}_1+\NN\mathbf{e}_2$ and
    \begin{align*}
      D(\mathbf{e}_1) & =K_X+A+S_1,\\
      D(\mathbf{e}_2) & =K_X+A+S_2 
    \end{align*}
    where $S_1$, $S_2$ are smooth and meet transversally. The ring
    $R(X;D)$ is dlt. 

    For $N\gg 0$ we can write:
    \begin{align*}
      NA \sim S_1+T_2 \sim S_2+T_1
    \end{align*}
    where $S_1+S_2+T_1+T_2$ is a snc divisor. Choose a rational
    function $\varphi \in k(X)$ such that $-S_1+T_1=-S_2+T_2+\divi_X
    \varphi$. For $0\ll \varepsilon \ll 1$ we have:
    \[
    D=-\frac{\varepsilon}{N}S_1 +\frac{\varepsilon}{N}T_1
     =-\frac{\varepsilon}{N}S_2 +\frac{\varepsilon}{N}T_2 +
     \frac{\varepsilon}{N} \divi_X (\varphi) .
    \]
    Then $A^\prime = A -D$ is ample and, setting $B_i=(\varepsilon/N)T_i$:
    \begin{align*}
      K_X+A+S_1 &= K_X+A^\prime + S_1 +D =
      K_X+A^\prime+\bigl(1-\frac{\varepsilon}{N}\bigr)S_1 + B_1 \\
      K_X+A+S_2 & =
      K_X+A^\prime+\bigl(1-\frac{\varepsilon}{N}\bigr)S_2 + B_2
      +\frac{\varepsilon}{N} \divi_X (\varphi). 
    \end{align*}
    Next, define a new characteristic system $D^\prime\colon \NN^2 \to
    \Div_\QQ X$ by
    \begin{align*}
      D^\prime (\mathbf{e}_1)&=K_X+A^\prime+
      \bigl(1-\frac{\varepsilon}{N}\bigr)S_1 + B_1\\
      D^\prime (\mathbf{e}_2)&=K_X+A^\prime+
      \bigl(1-\frac{\varepsilon}{N}\bigr)S_2 + B_2
    \end{align*}
    By construction, $D^\prime$ is $\QQ$-linearly equivalent to $D$
    and the ring $R(X;D^\prime)$ is klt, which proves the first part
    of the statement in this case. 
\end{proof}

\subsection{Proper birational morphisms}
\label{sec:prop-birat-morph}

Adjoint rings behave well under proper birational morphisms;
when working with the restriction of strictly dlt rings, it is useful to
blow up $X$ to simplify singularities in order to satisfy the assumptions
of the lifting lemma. 

\begin{lem}
  \label{lem:5}
  Let $X$ be nonsingular projective and $R(X;D)$ a plt big
  adjoint ring on $X$:
\[
D(\lambda)=r(\lambda)\bigl(K_X+\Delta (\lambda) \bigr)
\quad
\text{where}
\quad
\Delta (\lambda)= A+S+B(\lambda) .
\]
 Let $B \subset X$ be a snc divisor such that all  
 $\Supp B(\lambda)\subset B$.
  There is a proper birational morphism $f\colon Y \to X$, and a plt
  big adjoint ring $R(Y;D^\prime)$:
 \[
D^\prime (\lambda)=r(\lambda)\bigl(K_Y+\Delta^\prime (\lambda) \bigr)
\quad
\text{where}
\quad
\Delta^\prime (\lambda)= A^\prime+T+B^\prime(\lambda)
\]
 with $T\subset Y$ the proper transform of $S$, with the following properties:
 \begin{itemize}
 \item The $f$-exceptional set $E$ is a divisor. Also, denoting by
   $B^\prime\subset Y$ the proper transform of $B\subset X$, $B^\prime
   \cup E$ is a snc divisor and all $\Supp B^\prime(\lambda) \subset
   B^\prime \cup E$;
 \item for all $\lambda \in \Lambda$, 
\[
K_Y+T+A^\prime+B^\prime(\lambda)=f^\star \Bigl(K_X+S+A+B(\lambda) \Bigr)
+ E(\lambda) ;
 \]
 where $E(\lambda)\geq 0$ is $f$-exceptional. This implies
 that $R(X;D)=R(Y;D^\prime)$, and;
\item for all $\lambda \in \Lambda$, the pair 
$\bigl(T,B^\prime (\lambda)_{|T}\bigr)$ is terminal. 
 \end{itemize}
\end{lem}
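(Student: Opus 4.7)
\emph{Proof plan.} Start with a projective log resolution $f_0 \colon Y_0 \to X$ of $(X, S + B)$: the $f_0$-exceptional locus is a divisor $E = \sum_i E_i$, the union $E \cup B'_0$ (with $B'_0 = f_{0\ast}^{-1} B$) is snc, and the proper transform $T$ of $S$ is smooth. The standard log discrepancy formula reads
\[
K_{Y_0} + T + B'_0(\lambda) = f_0^{\,\ast}\bigl(K_X + S + B(\lambda)\bigr) + \sum_i a_i(\lambda)\, E_i ,
\]
with $B'_0(\lambda) = f_{0\ast}^{-1} B(\lambda)$. The plt hypothesis forces $a_i(\lambda) > -1$; since $B(\lambda)$ is supported in the fixed snc divisor $B$ and varies in a rational piecewise-linear manner in $\lambda$, so does each $a_i(\lambda)$, and by compactness the negative parts $a_i(\lambda)^{-} := \max(-a_i(\lambda), 0)$ stay uniformly bounded away from $1$.

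The heart of the construction is absorption. The negativity lemma supplies an effective $f_0$-exceptional divisor $E_0$ with $-E_0$ being $f_0$-ample, so that $A' := f_0^{\,\ast} A - \delta\, E_0$ is ample on $Y_0$ for any sufficiently small rational $\delta > 0$. Set
\[
B'(\lambda) := B'_0(\lambda) + \sum_i a_i(\lambda)^{-}\, E_i + \delta\, E_0 , \qquad E(\lambda) := \sum_i a_i(\lambda)^{+}\, E_i ,
\]
both effective, with $E(\lambda)$ exceptional. A direct rewriting of the discrepancy formula yields
\[
K_{Y_0} + T + A' + B'(\lambda) = f_0^{\,\ast}\bigl(K_X + S + A + B(\lambda)\bigr) + E(\lambda) .
\]
For $\delta$ small every non-$T$ coefficient of $T + A' + B'(\lambda)$ is strictly less than one, so $(Y_0, T + A' + B'(\lambda))$ is plt and $R(Y_0; D')$ is a plt big adjoint ring; since $E(\lambda)$ is effective and $f_0$-exceptional, pushing forward under $f_0$ gives $R(Y_0; D') = R(X; D)$.

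The main obstacle is the terminality of $(T, B'(\lambda)_{|T})$: adjunction for the plt pair on $Y_0$ delivers only klt on $T$, which is strictly weaker. For a pair $(T, \sum d_i D_i)$ with smooth ambient and snc boundary, terminality fails precisely at codimension-$k$ strata $\cap_{i\in I} D_i$, $|I|=k\geq 2$, where $\sum_{i\in I} d_i \geq k-1$. I eliminate each such offending stratum by blowing it up inside $Y_0$ (which preserves the snc condition) and then re-running the absorption step on the refined model with its enlarged exceptional locus. By Gordan's lemma $\Lambda$ has finitely many generators $\lambda_j$; it is enough to enforce terminality for each $\lambda_j$, because discrepancies are linear in the boundary and hence terminality is preserved under convex combinations. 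Finitely many further blowups thus produce the required $f \colon Y \to X$ satisfying all three bullets of the lemma.
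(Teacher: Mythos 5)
Your absorption step — write $K_{Y_0}+T+B'_0(\lambda)=f_0^\star(K_X+S+B(\lambda))+\sum a_i(\lambda)E_i$, split each $a_i$ into $a_i^+ - a_i^-$, put the negative parts into the boundary $B'(\lambda)$, keep the positive parts as the exceptional $E(\lambda)$, and twist $A$ by $-\delta E_0$ to make $A'$ ample — is exactly what the paper does, and the bookkeeping you give (including the cancellation of $\delta E_0$ and the check that nothing in $T+A'+B'(\lambda)$ reaches coefficient $1$ for $\delta$ small) is correct and would carry over to the general $\Lambda$ using the linearity/boundedness you mention.

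Where you diverge from the paper, and where there is a genuine gap, is the terminality of $\bigl(T,B'(\lambda)_{|T}\bigr)$. You propose an iteration: blow up each stratum of $T$ where the snc-terminality criterion fails, re-run the absorption on the larger model, and assert that ``finitely many further blowups'' suffice. That termination claim is precisely the content you would need to prove, and it is not automatic: after you blow up an offending stratum and re-absorb, the coefficient of the new exceptional $E_{\mathrm{new}}$ in the boundary is $\max\bigl(-a(E_{\mathrm{new}};K_X+S+B(\lambda)),0\bigr)$ plus a small error, and nothing in your argument rules out this coefficient being again close to $1$, creating new offending strata above the old one. (This does in fact happen along the boundary divisor $S$ for plt pairs — already on a surface with $\Delta=L_1+\tfrac12L_2$ the exceptionals over the origin give $a=-\tfrac12$ at every stage of the chain.) The paper sidesteps the iteration entirely: it chooses a single resolution that extracts all geometric valuations over $X$ with negative discrepancy for $K_X+S+B(\lambda)$, arguing this set is finite, and then observes via subadjunction that the restriction to $T$ is terminal. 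The missing ingredient in your write-up is the finiteness input that controls the iteration — namely that, by plt adjunction, the pair $\bigl(S,B(\lambda)_{|S}\bigr)$ is klt, so there are only finitely many divisorial valuations over $S$ with log discrepancy $\leq 1$, and each of your blowups of an offending stratum extracts (via the subadjunction correspondence) one of these. Without invoking that, the ``finitely many further blowups'' step is an unsupported assertion, not a proof. Your remark that it suffices to enforce terminality at the finitely many generators $\lambda_j$, by convexity of discrepancy in the boundary, is a correct and useful observation which the paper's sketch (written only for $\Lambda=\NN$) does not spell out.
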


\begin{proof}[Proof in a special case]
  I prove the statement in a special case that contains all the ideas:
  assume that $\Lambda = \NN$, that is:
\[
R(X;D)=\bigoplus_{n\geq 0} H^0\bigl(X;n(K_X+S+A+B)\bigr)
\]
 where $(X,S+B)$ is a plt pair. 

 For $f\colon Y\to X$ a proper birational morphism with exceptional
 divisors $E_i$, I write
\[
K_Y +T + f^{-1}_\star B= f^\star(K_X+S+B)+\sum a_i E_i
\quad
\text{with all}
\quad
a_i>-1 .
\] 
Here $a_i=a(E_i; K_X+S+B)$ is the \emph{discrepancy} along $E_i$ of
the divisor $K_X+S+B$: it only depends on the \emph{geometric valuation} $\nu
= \nu (E_i)$ measuring order of vanishing along $E_i$. Next, setting
$B_Y=f^{-1}_\star B -\sum_{a_i<0}a_i E_i$, I get:
\[
K_Y+T+B_Y=f^\star(K_X+S+B)+E
\]
where $f_\star B_Y=B$ and $E=\sum_{a_\geq 0}a_iE_i \geq 0$ is
exceptional.

Pick a good resolution $f\colon Y \to X$ with the property that all
geometric valuations $\nu$ with $a(\nu ; K_X+S+B)<0$ are divisors on
$Y$ (the set of these valuations is finite hence such a resolution
exists); it is a simple matter to check that the pair $(T, B_{Y|T})$
is terminal.

Finally, choose an ample $\QQ$-divisor 
\[
A^\prime = f^\star A -\sum \varepsilon_i E_i
\]
on $Y$, where $0<\varepsilon_i \ll 1$. Setting $B^\prime=B_Y+\sum
\varepsilon_i E_i$, it is still true that $(T,B^\prime_{|T})$ is
terminal, and:
\[
K_Y +T+A^\prime +B^\prime =f^\star (K_X+S+A+B) + E .
\]
\end{proof}

\section{The main construction}
\label{sec:reduct-strictly-lt-1}

In this section I give a complete proof of theorem~A assuming
theorem~\ref{thm:12}.

\begin{lem}
  \label{lem:10}
  Let $X$ be a nonsingular projective variety, $\sum_{i=1}^r B_i$ a snc
  divisor on $X$, and $B=\sum_{i=1}^r b_i B_i$ a klt divisor
  (that is, $0\leq b_i<1$ for $i=1,\dots,r$).

  Let $A$ be an ample $\QQ$-divisor on $X$ and assume that for some
  integer $p>0$ $|p(K_X+A+B)|\neq 0$.

  Consider the parallelepiped:
\[
\Bc = \prod_{i=1}^r [b_i,1]B_i \subset \Div_\RR X,
\]
and the cone and monoid:
\[
\Cc = \RR_+ \bigl(K_X+A+\Bc \bigr) \subset \Div_\RR X;\quad
\Lambda =\Cc \cap \Div_\ZZ X.
\]
  
  Then, assuming theorem~\ref{thm:12}, the ring $R(X;\Lambda)$ is
  finitely generated.
\end{lem}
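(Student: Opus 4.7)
My plan is to deduce the lemma from theorem~\ref{thm:12} by chopping $\Cc$ into finitely many rational subcones on which the ring is strictly dlt, applying theorem~\ref{thm:12} and induction on $\dim X$, and reassembling. First observe that $R(X;\Lambda)$ is itself a big dlt adjoint ring on $X$: the inclusion $\Cc\subset\Div_\RR X$ provides the rational linear characteristic system, the coefficients of each $B_i$ lie in $[b_i,1]\subset[0,1]$ so every pair $(X,\Delta(\lambda))$ is dlt, and the hypothesis $|p(K_X+A+B)|\ne\emptyset$ together with $B^\prime\ge B$ for every $B^\prime\in\Bc$ implies that every $D(\lambda)$ is effective. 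However $R(X;\Lambda)$ is not strictly dlt: on the ``klt corner'' $\RR_+(K_X+A+B)\subset\Cc$ no $B_i$ has coefficient $1$, so theorem~\ref{thm:12} cannot be applied directly.

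The main step is therefore to construct a finite rational polyhedral decomposition
\[
\Cc^\prime=\bigcup_{j=1}^N\Cc_j
\]
of some inflation $\Cc^\prime\supset\Cc$ (to which Lemma~\ref{lem:2} can later be applied) such that on each $\Cc_j$ there is a prime $B_{i(j)}$ appearing with coefficient identically $1$; then each ring $R_j=R(X;\Lambda_j)$ with $\Lambda_j=\Cc_j\cap\Div_\ZZ X$ is strictly dlt. Carrying out this chopping so that the union really covers $\Cc^\prime$, and in particular the klt direction, is the main technical obstacle: a na\"\i ve stratification of $\Cc$ by the upper facets of $\Bc$ leaves out precisely the ray $\RR_+(K_X+A+B)$, and the fix, which is the content of the ``main construction'' of section~\ref{sec:reduct-strictly-lt}, is to enlarge the cone until this ray can be written as a positive combination of rays on which some coefficient has been forced to $1$.

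Once the decomposition is in hand, theorem~\ref{thm:12} applies to each strictly dlt ring $R_j$ and shows that the restricted ring $R_{B_{i(j)}}(X;D_j)$ is a klt big adjoint ring on a smooth birational model $T_{i(j)}\to B_{i(j)}$ of dimension $n-1$. By the inductive hypothesis---theorem $A_{n-1}$, combined with theorem $C_{n-1}$ to remove the effectivity assumption as in remark~\ref{rem:10}---this restricted ring is finitely generated. A standard lifting argument then transfers finite generation to $R_j$: the kernel of the restriction homomorphism is the principal ideal generated by the section cutting out $B_{i(j)}$, and lifting a finite generating set for $R_{B_{i(j)}}(X;D_j)$ and adjoining this section produces a finite generating set for $R_j$. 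Finite generation of all the $R_j$ then immediately implies finite generation of $R(X;\Lambda^\prime)=\sum_j R_j$ (by taking the union of their generating sets), and Lemma~\ref{lem:2} finally yields finite generation of $R(X;\Lambda)$, completing the proof.
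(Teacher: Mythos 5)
Your overall outline---chop $\Cc$ into subcones $\Cc_j$ on which the ring is strictly dlt, apply theorem~\ref{thm:12} and induction on dimension to get finite generation of each restricted ring $R_{B_j}(X;\Lambda_j)$, then reassemble---is the right skeleton and matches the paper. But the step that actually transfers finite generation back from the restricted rings to $R$ is not correctly argued, and this is precisely where the real work of the proof lies.

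You assert that ``the kernel of the restriction homomorphism $\rho_j\colon R_j\to R_{B_j}$ is the principal ideal generated by the section $\sigma_j$ cutting out $B_j$, and lifting a finite generating set for $R_{B_j}$ and adjoining this section produces a finite generating set for $R_j$.'' This is false. If $x\in R_{j,\lambda}$ restricts to zero on $B_j$, then indeed $x=\sigma_j y$, but $y$ is a section of $\oo_X\bigl(D(\lambda)-B_j\bigr)$, and the class $\lambda - B_j$ need \emph{not} lie in $\Lambda_j$, nor even in $\Lambda$ --- near the boundary of the cone, or when $\lambda$ is small, subtracting $B_j$ pushes you out of $\Cc$ entirely. (For that matter $\sigma_j$ itself is typically not an element of $R_j$, so ``the principal ideal $(\sigma_j)$ in $R_j$'' is not even well defined.) Consequently you cannot conclude that each $R_j$ is finitely generated, and in fact the paper never claims this.

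What the paper actually proves is finite generation of the auxiliary ring $R[\sigma_1,\dots,\sigma_r]$, graded by $\NN^r$, and then deduces finite generation of $R$ via lemma~\ref{lem:2}. The key technical ingredient you are missing is the estimate~(\ref{eq:total_degree}): there is an $N$ such that whenever $\mb\in\Cc_j\cap\ZZ^r$ has total degree $\tau(\mb)>N$, the shifted vector $\mb-B_j$ still lies in $\Cc\cap\ZZ^r$. This is what guarantees that after splitting off $\sigma_j$ the quotient $y$ remains in $R$ (not in $R_j$!) and has strictly smaller total degree, so an induction on $\tau$ can run, seeded by a finite basis of the low-degree part $\oplus_{\tau(\lambda)\le N}R_\lambda$. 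Without this estimate and the auxiliary ring, the reassembly step has no content. As a secondary point, your account of why the cone must be inflated is also imprecise: the paper enlarges the set $\{B_i\}$ (and adjusts the canonical divisor via lemma~\ref{lem:4}) so that $K_X+A+B=\sum p_iB_i$ with all $p_i\ge 0$; it is this positivity that makes the back faces $\Bc_j$ cover the cone, not a condition about expressing the ray $\RR_+(K_X+A+B)$ as a combination of other rays.
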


\begin{proof}
  In the course of the proof, I work in the vector subspace $V=\RR^r
  \subset \Div_\RR X$ spanned by the prime divisors $B_i$; I denote by
  $D\colon V \hookrightarrow \Div_\RR X$ the canonical inclusion.  By
  suitably enlarging (lemma~\ref{lem:2}) the set $\{B_i\}$, and an
  appropriate choice of the canonical divisor (lemma~\ref{lem:4}), I
  can assume that
\[
K_X+A+\sum_{i=1}^r b_iB_i = \sum_{i=1}^r p_i B_i
\]
where all $p_i\geq 0$. In addition, perhaps by blowing up $X$ and using
lemma~\ref{lem:5}, I can assume that, even after enlargement, the
divisor $\sum_{i=1}^r B_i$ is still snc.

The key is to ``chop up'' $R=R(X;\Lambda)$ into finitely many strictly
dlt subrings. Consider the $r$ `back faces' of the parallelepiped $\Bc$:
\[
\Bc_j=\{B=B_j+\sum_{i\not = j} c_i B_i\mid \text{all}\; b_i\leq c_i \leq
1\} 
\quad
\text{for}\;
j=1, \dots, r.
\]
It is clear that 
\[
\Cc=\bigcup_{j=1}^r\Cc_j, \quad
\text{where}
\quad
\Cc_j = \RR_+ \bigl(K_X+A+\Bc_j \bigr)
\]
(this uses in a crucial way that $\Cc \subset \sum_{i=1}^r \RR_+ B_i$)
hence, setting $\Lambda_j = \Cc_j \cap \ZZ^r$, $R =\sum_{j=1}^r
R_j$ where each $R_j=R(X;\Lambda_j)$ is a strictly dlt
adjoint ring. For $j=1,\dots, r$, denote by
\[
\rho_j\colon R_j\to R_{B_j}=R_{B_j}(X;\Lambda_j)
\] 
the surjective ring homomorphisms to the restricted rings. By
theorem~\ref{thm:12}, and by induction on dimension, the
$R_{B_j}$ are finitely generated. 

I show that $R$ is finitely generated. I can't prove this directly, so
let $\sigma_j\in H^0(X;B_j)$ be a section vanishing on $B_j$
($\sigma_j$ is determined up to multiplication by a nonzero constant);
I show instead that the ring
\[
R [\sigma_1, \dots, \sigma_r], 
\quad
\text{graded by}
\quad
\NN^r,
\]
is finitely generated. By lemma~\ref{lem:2} again, this implies that
$R$ itself is finitely generated. Fix the total degree function
\[
\tau \colon \NN^r \to \NN,
\quad
\tau (m_1, \dots, m_r)=\sum_{j=1}^r m_j .
\] 
Let $N\gg 0$ be large enough that the
following holds: 
\begin{multline}
  \label{eq:total_degree}
\text{If}\;
\mb=(m_1, \dots, m_r)\in \Cc_j\cap \ZZ^r
\;
\text{and}
\;
\tau (\mb) >N, \\
 \text{then}
\;
\mb -B_j = (m_1, \dots, m_j-1, \dots, m_r)\in \Cc \cap \ZZ^r.  
\end{multline}
(It is pretty obvious that you can find $N\gg 0$ with this property;
draw a picture!) Prepare now the following finite sets:
\begin{itemize}
\item A basis $G_0$ of $\oplus_{\tau(\lambda)\leq N}R_\lambda$;
\item For all $j=1, \dots, r$, a set $G_j\subset R_j\subset R$ lifting a
  set of generators of $R_{B_j}$.
\end{itemize}
I conclude the argument by showing that, using
equation~\ref{eq:total_degree}, the union $G=\cup_{j=0}^r G_j \cup
\{\sigma_0, \dots, \sigma_r\}$ generates the ring $R[\sigma_1,
\dots, \sigma_r]$. It is enough to show that $R\subset \CC[G]$.
Assume by induction that $M\geq N$ and: for $\tau (\lambda)\leq M$ all
$R_\lambda \subset \CC[G]$; let $\tau (\lambda)=M+1$, and $x
\in R_\lambda$.  Now, for some $j=1, \dots, r$, $\lambda \in \Cc_j$,
so consider the restriction homomorphism:
\[
\rho_j \colon R_j \to R_{B_j}  .
\]
It is clear that there is $x_j\in \CC[G_j]$ such that
$\rho_j(x-x_j)=0$. This means that 
\[
x-x_j = \sigma_j\, y
\]
where $y\in H^0(X; D(\lambda -B_j))$. By
property~\ref{eq:total_degree}, $\lambda-B_j\in \Cc$,
therefore 
\[
y\in R_{\lambda-B_j}
\quad
\text{has total degree}
\quad
\tau(y)=\tau (x)-1 ,
\]
hence, by induction, $y\in \CC[G]$, and hence also $x\in \CC[G]$. 
\end{proof}

\begin{proof}[Proof of theorem~A] I need to show that a big klt
  adjoint ring $R=R(X;D)$, where $D(\Cc)\subset \Div^+_\RR X$, is
  finitely generated. By a simple application of lemma~\ref{lem:2}, I
  may assume that $D\colon (\Cc \subset \RR^r) \to \Div_\RR^+ X $ is
  rational linear. By lemma~\ref{lem:9}, I may also assume that $D$ is
  injective. As before for $\mathbf{v}\in \Cc$ I write
  \[D(\mathbf{v})=r(\mathbf{v})\bigl(K_X+A+B(\mathbf{v})\bigr)=
  r(\mathbf{v})\Bigl(K_X+A+\sum_{i=1}^r b_i(\mathbf{v}) B_i\Bigr).\]

  Let $\eb_l$ for $l=1,\dots, m$ be generators of the cone 
  $\Cc$. I will shortly need the quantity
  \[
  \delta= \min_{i=1,\dots, r} \min_{\mathbf{v}\in \Cc}
  \{1-b_i(\mathbf{v})\}
   = \min_{i=1,\dots, r} \min_{l=1,\dots, m}
  \{1-b_i(\mathbf{e}_l)\} >0.
  \]
  
  For a rational vector $\mathbf{v}\in \Cc$ lying on the hyperplane
  $\Pi= \{\mathbf{v} \mid r(\mathbf{v})=1\}$, consider the
  parallelepiped $\Bc(\mathbf{v})=\prod [b_i(\mathbf{v}),1]\subset V$
  and the cone
  $\Cc(\mathbf{v})=\RR_+\bigl(K_X+A+\Bc(\mathbf{v})\bigr)$. Because
  all sides of all parallelepipeds $\Bc(\mathbf{v})$ have length $\geq
  \delta$, there are finitely many vectors $\mathbf{v}_1,
  \dots,\mathbf{v}_n \in \Cc \cap \Pi$ such that $\Cc \subset
  \cup_{k=1}^n \Cc (\mathbf{v}_k)$.  Write
  $\Lambda(\mathbf{v})=\Cc(\mathbf{v})\cap \ZZ^r$. Now $R\subset
  \sum_{k=1}^n R(X;\Lambda(\mathbf{v}_k))$ is finitely generated by
  lemma~\ref{lem:2} and lemma~\ref{lem:10}.
\end{proof}

\section{Lifting lemmas}
\label{sec:lifting-lemma}

A quick internet search will turn up several papers on lifting lemmas.
The prototype can be traced back to \cite{MR1660941}; the best place
to start learning the material is \cite[Theorem~11.5.1]{MR2095472};
the lifting theorem~\ref{thm:6}, theorem~\ref{thm:1} and
corollary~\ref{cor:2} are all due to Hacon and
M\textsuperscript{c}Kernan.

\subsection{General initial set-up}
\label{sec:general-set-up-1}

All variants and improvements of the lifting lemma have a common
initial set-up that I now summarise:

\begin{itemize}
\item $X$ is nonsingular projective; $S\subset X$ is a nonsingular 
  divisor;
\item $(X, \Delta =S+A+B)$ is a plt pair; here $A$ is an ample
  $\QQ$-divisor; I always assume that $A$ meets transversally
  everything in sight, and I sometimes assume that the coefficients of
  $A$ are sufficiently small;
\item Write $\Omega := (A+B)_{|S}$; I assume that the pair $(S,
  \Omega)$ is terminal.
\end{itemize}

The purpose of the lifting lemma is always this: Fix a strictly
positive integer $p$ such that $p\Delta \in \Div_\ZZ X$, then study
the \emph{restricted adjoint linear system}:
\begin{equation*}
  \label{eq:res_linear_sys}
|p(K_X+\Delta)|_{S}  .
\end{equation*}
Now, of course, $p(K_X+\Delta)_{|S}=p(K_S+\Omega)$, and, in general,
I don't expect the restricted linear system to be the complete linear
system $|p(K_S +\Omega)|$. Indeed, simple examples show that, for
basic reasons, the restricted linear system can have a fixed part.
The key point of the lifting lemma is to choose divisors $\Theta,
\Phi$ on $S$ with 
\[ 
0\leq \Theta \leq \Omega
\quad
\text{and}
\quad
\Theta +\Phi =\Omega
\]
and compare the restricted linear system $|p(K_X+\Delta)|_S$ with the
linear system with fixed part $|p(K_S+\Theta)|+\Phi$.

\begin{nota}
  Let $E_i$ be prime divisors on $X$. For divisors
\[
D_1=\sum d_1^i E_i, \quad D_2=\sum d_2^i E_i,
\]
  I write
\[
D_1 \wedge D_2 =\sum \min \{d_1^i,d_2^i\} E_i.
\]
\end{nota}

\subsection{Simple lifting}
\label{sec:easy-lifting}

This is the simplest statement that one can make:
\begin{thm}
  \label{thm:6} Fix an integer $p>0$ such that $p\Delta$ is integral.

  Assume that $S\not \subset \Bb(K_X+\Delta)$. 

  Write
\[
F_p=\frac1{p}\Fix |p(K_X+\Delta)|_{S}; 
\quad
\Phi_p = \Omega \wedge F_p; 
\quad
\Theta_p = \Omega -\Phi_p .
\]
Then
\[
|p(K_X+\Delta)|_{S}=|p(K_S+\Theta_p)|+p\Phi_p .
\]
\qed
\end{thm}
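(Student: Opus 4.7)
The plan is to establish the two inclusions separately, with $\subseteq$ being a direct manipulation and $\supseteq$ carrying the substantive lifting content.

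For $\subseteq$: given any $D \in |p(K_X+\Delta)|_S$, one has $D \geq \Fix|p(K_X+\Delta)|_S = pF_p \geq p\Phi_p$, since $\Phi_p = \Omega \wedge F_p \leq F_p$. Thus $D' := D - p\Phi_p$ is effective; and adjunction $D \sim p(K_S+\Omega)$ gives $D' \sim p(K_S+\Theta_p)$, placing $D' \in |p(K_S+\Theta_p)|$. Hence $D = D' + p\Phi_p$ lies in $|p(K_S+\Theta_p)| + p\Phi_p$.

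For $\supseteq$: given $D_S \in |p(K_S+\Theta_p)|$, I aim to produce $D \in |p(K_X+\Delta)|$ with $D_{|S} = D_S + p\Phi_p$. The plan is to pass to a log resolution $\pi \colon Y \to X$ of $(X,\Delta)$ that simultaneously resolves the base locus of $|p(K_X+\Delta)|$. Using lemma~\ref{lem:5}, I arrange that the proper transform $T$ of $S$ is smooth, the pair on $Y$ stays plt with $(T, \Omega')$ terminal, and $\pi^\star p(K_X+\Delta) = M + F$ with $|M|$ base-point-free. Since $S \not\subset \Bb(K_X+\Delta)$, $T$ is not a component of $F$, so $F_{|T}$ is well-defined and pushes down to $pF_p$ on $S$. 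I would then apply Kawamata--Viehweg vanishing on $Y$ to the kernel of the restriction sequence for $\pi^\star p(K_X+\Delta)$ along $T$, suitably twisted by a multiplier-ideal correction built from $F$.

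The main obstacle is the combinatorial bookkeeping in that correction: one must subtract from $p\pi^\star(K_X+\Delta) - K_Y - T$ exactly the right $\QQ$-boundary --- a piece of $F$ weighted against $\Phi_p$ --- so that (i) what remains is big and nef on $Y$, permitting Kawamata--Viehweg vanishing, and (ii) after restricting to $T$ and pushing down to $S$, the image in $H^0$ recovers precisely $|p(K_S+\Theta_p)|$ translated by $p\Phi_p$, neither less (which would make the lifting fail) nor more (which would contradict the easy direction). The ampleness of $A$ provides the strict-positivity slack needed for vanishing, while the terminality of $(S,\Omega)$ ensures the multiplier-ideal contribution along $T$ is trivial, so no section of $|p(K_S+\Theta_p)|$ is accidentally killed. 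Pushing the lifted section on $Y$ down via $\pi$ and restricting to $S$ then yields the desired $D$.
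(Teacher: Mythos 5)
The paper does not prove theorem~\ref{thm:6} itself: the terminal \qed marks absence of proof (per the convention set out in section~\ref{sec:basic-definitions}), and the surrounding text attributes the result to Hacon and M\textsuperscript{c}Kernan, pointing to \cite[Theorem~11.5.1]{MR2095472} and \cite{Hc}. So there is no internal argument to compare your proposal against; I assess your sketch on its own terms.

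Your $\subseteq$ direction is correct and complete: any $D$ in the restricted system dominates the fixed part $pF_p \geq p\Phi_p$, and $D - p\Phi_p \sim p(K_S+\Theta_p)$ is effective. Your $\supseteq$ direction names the right ingredients---a log resolution $\pi\colon Y\to X$ simultaneously resolving the base locus, Kawamata--Viehweg vanishing against a multiplier-ideal-type twist, the ample $A$ supplying the strict positivity, terminality of $(S,\Omega)$ preventing accidental loss of sections on $T$---but the paragraph beginning ``The main obstacle is the combinatorial bookkeeping'' states the difficulty rather than resolving it, and that difficulty \emph{is} the theorem. Concretely, what is missing: the explicit choice of an effective integral divisor $N$ on $Y$, supported on the resolved fixed part and not containing $T$, such that $p\pi^\star(K_X+\Delta)-N$ can be written as $K_Y+T$ plus a klt boundary plus a big and nef $\QQ$-divisor; the application of vanishing to that decomposition to conclude surjectivity of $H^0\bigl(Y,\, p\pi^\star(K_X+\Delta)-N\bigr)\to H^0(T,\,\cdot\,)$; and the discrepancy computation, using terminality of $(S,\Omega)$, identifying the restricted line bundle on $T$ with $p(K_T+\Theta_p^Y)$ up to an effective exceptional divisor, so that sections of $|p(K_S+\Theta_p)|$ genuinely pull back to $T$ and then lift to $Y$. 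Without carrying out these steps the proposal is a correct plan whose gap sits exactly where the substance of the lifting lemma lives.
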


\subsection{Sharp lifting}
\label{sec:sharp-lifting}

Next I state a subtle but crucial improvement of the lifting lemma.

\begin{dfn}
  \label{dfn:8}
  Let $X$ be normal projective, $S\subset X$ a codimension $1$
  subvariety, and $D$ a $\QQ$-Cartier divisor on $X$. If $S \not \subset
  \Bb (D)$, then the \emph{restricted asymptotic fixed part} is
\[
\Fb_S (D) =\inf_{0<n\in \ZZ\,, nD \in \Div_\ZZ X}\; 
\frac1{n}\Fix\bigl( |nD|_{S}\bigr) \in \Div^+_\RR S.
\]
It is important to appreciate that, in general, even though $D$ is
a divisor with rational coefficients, $\Fb_S (D)$ can have nonrational
real coefficients.
\end{dfn}

\begin{rem}
  \label{rem:9}
  It is crucial to be aware that $\Fb_S(D)$, $\Fb (D)_{|S}$, and $\Fb
  (D_{|S})$ are three distinct divisors in general. 
\end{rem}
\begin{thm}  
  \label{thm:1} Fix an integer $p>0$ such that $p\Delta$ is integral.

  Assume that \[S\not \subset \Bb (K_X+\Delta+A/p)  .\]
(Note that this holds in particular if $S\not\subset \Bb
(K_X+\Delta+\varepsilon A)$ for some rational $0\leq \varepsilon \leq
1/p$.)
 
Write $\Fb_S = \Fb_S (K_X+\Delta+A/p)$. Consider a $\QQ$-divisor
$\Phi$ on $S$ such that
\[
p\Phi \quad \text{is integral and}
\quad
\Omega \wedge \Fb_S \leq \Phi \leq \Omega;
\quad
\text{write}
\quad \Theta = \Omega -\Phi .
\]
Then 
\[
|p(K_X+\Delta)|_{S}\supset |p(K_S + \Theta)|+p\Phi .
\]
\qed
\end{thm}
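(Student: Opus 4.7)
The plan is to recast the inclusion as a lifting statement and then prove it by Kawamata--Viehweg vanishing on a log resolution. Given $\sigma\in H^0(S, p(K_S+\Theta))$, multiplication by a fixed section $s_\Phi$ of $\oo_S(p\Phi)$ cutting out the effective divisor $p\Phi$ sends it to $\sigma\cdot s_\Phi \in H^0(S, p(K_S+\Omega))$. Since $(K_X+\Delta)_{|S} = K_S+\Omega$, the claim $|p(K_X+\Delta)|_S \supset |p(K_S+\Theta)|+p\Phi$ is equivalent to the statement that every such $\sigma\cdot s_\Phi$ lies in the image of the restriction $H^0\bigl(X, p(K_X+\Delta)\bigr) \to H^0\bigl(S, p(K_S+\Omega)\bigr)$. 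The obstruction lives in $H^1$ on $X$ of an appropriate ideal sheaf, so this is a cohomology-vanishing problem.

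The geometric input comes from the hypothesis $S\not\subset \Bb(K_X+\Delta+A/p)$. For $n$ large and divisible by $p$, pick an effective divisor $M_n\in |n(K_X+\Delta+A/p)|$ with $S\not\subset \Supp M_n$ and $(1/n)M_{n|S}$ approximating $\Fb_S$ as closely as desired. Take a common log resolution $f\colon Y\to X$ of $(X,\Delta+M_n)$, with proper transform $T$ of $S$ and exceptional set $\sum E_i$. On $Y$, assemble the $\QQ$-divisor
\[
\Xi = T + A' + B_Y + \tfrac{p-1}{np}\,f^*M_n,
\]
where $B_Y$ is the standard terminal lift of $B$ and $A' = f^*A - \sum\varepsilon_iE_i$ is a small ample perturbation of $f^*A$. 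For suitable $\varepsilon_i$ and $n$, the pair $(Y,\Xi)$ is plt along $T$ and the residual divisor $(K_Y+\Xi) - (T + \lfloor \Xi - T\rfloor)$ is ample.

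Applying Kawamata--Viehweg vanishing to the restriction exact sequence for $T\subset Y$ then yields surjectivity of the adjoint restriction $H^0\bigl(Y, p(K_Y+\Xi)\bigr) \twoheadrightarrow H^0\bigl(T, p(K_T + (\Xi-T)_{|T})\bigr)$, and pushing down under $f$ via the projection formula translates this back into the desired surjection onto $s_\Phi\cdot H^0(S, p(K_S+\Theta))$ on $X$. The main obstacle is the coefficient bookkeeping in this last step: one must verify that the pushforward of $\lfloor (\Xi - T)_{|T}\rfloor$ from $T$ to $S$ is bounded above by $p\Phi$, and here both hypotheses of the theorem are used in tandem --- terminality of $(S,\Omega)$ keeps the snc discrepancies coming from $B_Y$ safely below $1$, while the condition $\Phi\geq \Omega\wedge \Fb_S$ absorbs precisely those components of $(1/n)M_{n|S}$ where the floor would otherwise exceed the corresponding coefficient of $\Omega$. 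Since $\Fb_S$ may have irrational coefficients, the final step is a limiting argument: let $n\to\infty$ and exploit the convexity/continuity of the divisor-valued functions $n\mapsto (1/n)\Fix|n(K_X+\Delta+A/p)|_S$ as $n$ ranges over multiples of $p$; this limit step, together with the compatibility of rounding on $Y$ with the bound $\Omega\wedge \Fb_S\leq \Phi$ on $S$, is the most delicate part of the proof.
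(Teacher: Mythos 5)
The paper does not prove Theorem~\ref{thm:1}: the \qed immediately following the statement signals, per the convention set out in the introduction, the \emph{absence} of a proof, and the preamble to Section~\ref{sec:lifting-lemma} explicitly attributes this lifting lemma (together with Theorem~\ref{thm:6} and Corollary~\ref{cor:2}) to Hacon and M\textsuperscript{c}Kernan. So there is nothing in the paper to compare your attempt against; what you have written is a sketch of the external Hacon--M\textsuperscript{c}Kernan/Takayama argument.

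Judged on its own terms, your high-level strategy is the right one --- effective approximants $M_n$ of $K_X+\Delta+A/p$ chosen to avoid $S$, a log resolution, and then Kawamata--Viehweg (really Nadel) vanishing to lift $\sigma\cdot s_\Phi$. But the sketch has concrete defects. First, the coefficient is wrong: one writes $p(K_X+\Delta)=K_X+S+B+A/p+(p-1)(K_X+\Delta+A/p)$, so the piece to replace by an effective divisor is $(p-1)(K_X+\Delta+A/p)\sim_\QQ \tfrac{p-1}{n}M_n$, not $\tfrac{p-1}{np}M_n$; with your coefficient $K_Y+\Xi$ pulls back only $\tfrac{2p-1}{p}(K_X+\Delta)$, so the degrees do not match $p(K_X+\Delta)$ and nothing can restrict to what you want. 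Second, the statement that ``the residual divisor $(K_Y+\Xi)-(T+\lfloor\Xi-T\rfloor)$ is ample'' is not the form that Kawamata--Viehweg vanishing takes; that residual equals $K_Y+\{\Xi-T\}$, which is never ample in the relevant situations, and what is actually needed is that $p(K_Y+\Xi_Y)-T$ can be written as $K_Y$ plus a simple normal crossing fractional boundary plus a big and nef class, all on the resolution, which forces careful control of which exceptional divisors round up. Third --- and this is the heart of the matter --- the one-sentence ``pushing down under $f$'' conceals exactly the step where the hypotheses $\Omega\wedge\Fb_S\le\Phi\le\Omega$ and terminality of $(S,\Omega)$ are used: one must show that the rounding error on $T$ pushes down to something $\le p\Phi$ on $S$, and it is here that you need to choose $n$ (and the resolution) adapted to $\Phi$, not merely ``large.'' Your closing remark that the limiting argument is delicate is accurate, but acknowledging the gap is not the same as filling it; as written, the multiplier-ideal bookkeeping that constitutes the actual content of the theorem is absent.
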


\begin{rem}
  \label{rem:1}
  Sharp lifting improves simple lifting in two ways: it relaxes the
  assumption and it strengthens the conclusion.
  \begin{description}
  \item[It relaxes the assumption] Here I just require that 
\[S \not \subset
  \Bb \bigl(K_X+\Delta + A/p\bigr) .\]
\item[It strengthens the conclusion] The conclusion now 
  holds for $\Omega
  \wedge \Fb_S \leq \Phi$ whereas earlier I required $\Omega \wedge
  F_p= \Phi_p$: note that 
  \begin{multline*}
\Fb_S=\Fb_S(K_X+\Delta +A/p)\leq \Fb_S(K_X+\Delta) \leq\\
\leq \frac1{p} \Fix \bigl( \left| p(K_X+\Delta)\right|_{S} \bigr)=F_p ,   
  \end{multline*}
  hence $\Phi$ is allowed potentially to be smaller than $F_p$. 
  \end{description}
 \end{rem}

\subsection{Tinkering lifting}
\label{sec:tinkering-lifting}

It is possible still to tinker with the statement of the lifting
lemma:

\begin{cor}
  \label{cor:2}
  Fix an integer $p>0$ such that $p\Delta$ is integral.

  Assume that $S\not \subset \Bb (K_X+\Delta)$.
Write $\Fb_S = \Fb_S (K_X+\Delta)$, and fix a rational $\varepsilon >0$
such that $\varepsilon (K_X+\Delta)+A$ is ample.

 Consider a $\QQ$-divisor $\Phi$ on $S$ such that
\[
p\Phi \quad \text{is integral and}
\quad
\Omega \wedge \Bigl(1-\frac{\varepsilon}{p}\Bigr)\Fb_S \leq \Phi \leq \Omega;
\quad
\text{write}
\quad \Theta = \Omega -\Phi .
\]
Then
\[
|p(K_X+\Delta)|_{S}\supset |p(K_S + \Theta)|+p\Phi .
\]
\end{cor}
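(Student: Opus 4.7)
The plan is to derive the corollary directly from the sharp lifting Theorem~\ref{thm:1}, by showing that the hypotheses of the corollary imply those of the theorem (with the \emph{same} $\Phi$ and $\Theta$). Two things must be checked: that $S\not\subset \Bb(K_X+\Delta+A/p)$, and that $\Omega\wedge \Fb_S(K_X+\Delta+A/p)\leq \Phi$. The first is immediate from $S\not\subset \Bb(K_X+\Delta)$ because $A/p$ is ample and adding an ample $\QQ$-divisor cannot enlarge the stable base locus. The second will follow from the assumption $\Omega\wedge(1-\varepsilon/p)\Fb_S\leq \Phi$ together with the key comparison
\[
\Fb_S\bigl(K_X+\Delta+A/p\bigr) \;\leq\; \bigl(1-\varepsilon/p\bigr)\,\Fb_S .
\]

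To establish this comparison, I would exploit the decomposition
\[
p\bigl(K_X+\Delta+A/p\bigr) \;=\; (p-\varepsilon)(K_X+\Delta) \;+\; A^\prime,
\qquad
A^\prime := \varepsilon(K_X+\Delta)+A,
\]
in which the second summand $A^\prime$ is ample by the hypothesis of the corollary. The argument then invokes three standard properties of the restricted asymptotic fixed part $\Fb_S$: subadditivity ($\Fb_S(D_1+D_2)\leq \Fb_S(D_1)+\Fb_S(D_2)$, obtained by multiplying sections and restricting to $S$), homogeneity of degree one, and vanishing on ample divisors (since $|nA^\prime|$ is base-point-free for $n\gg 0$, and likewise after restriction to $S$). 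Applied to the displayed decomposition these yield
\[
p\,\Fb_S\bigl(K_X+\Delta+A/p\bigr) \;\leq\; (p-\varepsilon)\Fb_S + 0,
\]
and dividing by $p$ finishes the argument.

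With both hypotheses of Theorem~\ref{thm:1} verified, the same choice of $\Phi$ and $\Theta=\Omega-\Phi$ yields the desired inclusion $|p(K_X+\Delta)|_S\supset |p(K_S+\Theta)|+p\Phi$. The main obstacle I anticipate is bookkeeping rather than substance: one has to state and apply carefully the three properties of the \emph{restricted} $\Fb_S$ (as opposed to the unrestricted $\Fb$), and in particular check that $S$ is not contained in the stable base locus of each summand in the decomposition so that every $\Fb_S$ in sight is actually defined. In our situation this is trivial, since $\Bb(A^\prime)=\emptyset$ because $A^\prime$ is ample and $S\not\subset\Bb(K_X+\Delta)$ by hypothesis.
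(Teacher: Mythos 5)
Your proposal is correct and follows exactly the paper's line of argument: the same decomposition of $K_X+\Delta+A/p$ as a convex combination of $K_X+\Delta$ and the ample divisor $\varepsilon(K_X+\Delta)+A$ (yours is just scaled by $p$), and the same three properties of $\Fb_S$ (subadditivity, degree-one homogeneity, vanishing on ample divisors) to deduce $\Fb_S(K_X+\Delta+A/p)\leq (1-\varepsilon/p)\Fb_S$, from which the hypotheses of Theorem~\ref{thm:1} follow. Your explicit verification that $S\not\subset\Bb(K_X+\Delta+A/p)$ is a welcome bit of bookkeeping the paper leaves implicit.
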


\begin{proof}
  Corollary~\ref{cor:2} follows from theorem~\ref{thm:1}:
\[
K_X+\Delta+\frac1{p}A=\Bigl(1-\frac{\varepsilon}{p}\Bigr)(K_X+\Delta)+
\frac1{p}\Bigl( \varepsilon(K_X+\Delta)+A\Bigr),
\] 
 hence:
 \begin{multline*}
\Fb_S \Bigl(K_X+\Delta+\frac1{p}A \Bigr)
\leq \Bigl(1-\frac{\varepsilon}{p}\Bigr) \Fb_S(K_X+\Delta)
+\frac1{p}\Fb_S (\text{ample})=\\
=\Bigl(1-\frac{\varepsilon}{p}\Bigr) \Fb_S(K_X+\Delta) ;   
 \end{multline*}
thus, the assumptions (those pertaining to $\Phi$) of
theorem~\ref{thm:1} are satisfied, hence its conclusion holds.
\end{proof}

\section{Restriction of strictly dlt rings}
\label{sec:restr-strictly-lt-1}

In this final section, I sketch the proof of theorem~\ref{thm:12}. Let
me briefly recall the set-up. $X$ is nonsingular projective,
$\dim X = n$, and $R(X;D)$ is a strictly dlt big adjoint ring on $X$
with characteristic system:
\[
D(\lambda)=r(\lambda)\bigl(K_X + \Delta(\lambda) \bigr)
\quad 
\text{where}
\quad
\Delta(\lambda)=S+A+B(\lambda) . 
\]
The aim, remember, is to show that the restricted
ring
\begin{multline*}
R_S(X;D)=\bigoplus_{\lambda \in \Lambda}\Image (\rho_\lambda), \quad 
\text{where}\\
\rho_\lambda \colon
H^0\Bigl(X;r(\lambda)\bigl(K_X+\Delta(\lambda)\bigr)\Bigr) \to
H^0\Bigl(S;r(\lambda)\bigl(K_S+\Omega(\lambda)\bigr)\Bigr) 
\end{multline*}
is the restriction map, is a klt adjoint ring.

After some simple manipulations, I may in addition assume the following:

\begin{enumerate}
\item All $\bigl(S, B(\lambda)_{|S}\bigr)$ are terminal pairs. (This
  can be achieved by using lemmas~\ref{lem:4} and~\ref{lem:5} in
  tandem.)
\item $A$ has small coefficients and meets everything in sight as
  generically as possible; in particular, for instance, I assume that
  all pairs
\[
\Bigl(S,\Omega(\lambda)=\bigl(A+B(\lambda)\bigr)_{|S}\Bigr)
\]
  are terminal. (See remark~\ref{rem:8} for this.)
\item For $\lambda \in \Lambda$, $S\not \subset \Bb \bigl(D(\lambda)
  \bigr)$. This can be achieved as an application of theorem~$B_n$:
  $\Cc^\prime = \Cc \cap \RR_+ \bigl(\Bc_{V,A}^{S=1}\bigr)$ is a
  finite rational cone; now work with $\Lambda^\prime=\Lambda \cap
  \Cc^\prime$ and $R(X; \Lambda^\prime)$ in place of $\Lambda$ and
  $R(X,\Lambda)$: the point is that $R_S(X; \Lambda^\prime)=R_S(X;
  \Lambda)$.
\item Denote by $\eb_i\in \RR^r$ the standard basis vectors.  Then
  $\Cc=\sum_{i=1}^r \RR_+ \eb_i\subset \RR^r$ is a simplicial cone,
  $\Lambda = \NN^r\subset \RR^r$, and $D\colon \Lambda \to \Div_\QQ
  X$ is the restriction of a linear function that, abusing notation, I
  still denote by $D\colon \RR^r \to \Div_\RR X$. This can be
  achieved by finding a triangulation of $\Cc$ on which $D$ is linear.
 \end{enumerate}

\begin{nota}
   Below I denote by $\Pi \subset \RR^r$ the affine hyperplane spanned
   by the basis vectors $\eb_i$.
   
   By what I said, $\Delta, B \colon \Lambda \to \Div_\QQ^+ X$ are
   restrictions of functions that, abusing notation, I still denote by
   $\Delta, B \colon \RR^r \to \Div_\RR X$. These are degree $0$
   homogeneous; hence, they are determined by their restrictions to
   the affine hyperplane $\Pi\subset \RR^r$; note that these
   restrictions are affine.

   Similarly, $\Omega\colon \RR^r\to \Div_\RR S$ is degree $0$
   homogeneous, and $\Omega_{|\Pi}$ is affine. 
\end{nota}

\begin{lem}
  \label{lem:7}
  For $\lambda \in \Lambda$, write $\Fb_S(\lambda)=\Fb_S
 \bigl(D(\lambda)\bigr)$ (N.B. by construction if $\ZZ\ni n>0$, then
 $\Fb_S(n\lambda)=n\Fb_S(\lambda)$). 

 Then $\Fb_S (-)$ can be uniquely extended to a degree $1$ homogeneous
 convex function that, abusing notation, I still denote by
\[
\Fb_S \colon \Cc \to \Div_\RR^+ S .
\]
 $\Fb_S$ is continuous on the interior $\Interior \Cc$ but not
 necessarily on $\Cc$.
 \end{lem}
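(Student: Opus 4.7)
\emph{Proof proposal.}
The strategy is to establish two structural properties of $\Fb_S$ directly on $\Lambda$ — subadditivity and positive-integer homogeneity — then extend first to $\Cc\cap\QQ^r$ by clearing denominators, and finally to $\Interior\Cc$ by the standard continuity of finite convex functions on the interior of a convex set; the extension to the boundary is obtained by applying the same procedure to each (rational) proper face of $\Cc$.

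For subadditivity on $\Lambda$: given $\lambda_1,\lambda_2\in\Lambda$ and $n>0$ sufficiently divisible that $nD(\lambda_i)$ and $nD(\lambda_1+\lambda_2)=nD(\lambda_1)+nD(\lambda_2)$ are all integral, multiplication of sections followed by restriction to $S$ yields
\[
\Fix\bigl(|nD(\lambda_1+\lambda_2)|_{S}\bigr) \leq \Fix\bigl(|nD(\lambda_1)|_{S}\bigr) + \Fix\bigl(|nD(\lambda_2)|_{S}\bigr),
\]
and dividing by $n$ then taking infimum gives $\Fb_S(\lambda_1+\lambda_2)\leq\Fb_S(\lambda_1)+\Fb_S(\lambda_2)$. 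Positive-integer homogeneity is built into the definition. For rational $\lambda\in\Cc\cap\QQ^r$ I then set $\Fb_S(\lambda):=\frac{1}{m}\Fb_S(m\lambda)$ for any positive integer $m$ with $m\lambda\in\Lambda$; well-definedness is the integer homogeneity. Subadditivity and positive-$\QQ$-homogeneity together exhibit $\Fb_S$ as a convex, degree $1$ homogeneous function on $\Cc\cap\QQ^r$ valued in $\Div_\RR^+ S$.

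To extend to $\Interior\Cc$ I pass to coefficients: for each prime divisor $E$ on $S$, the function $\mult_E\Fb_S\colon\Cc\cap\QQ^r\to\RR_{\geq 0}$ is a non-negative convex function on the rational points, and a finite convex function is automatically continuous on the interior of its domain, so $\mult_E\Fb_S$ extends uniquely by continuity to a finite convex function on $\Interior\Cc$, which inherits positive $\RR$-homogeneity. Density of $\QQ^r$ in $\RR^r$ plus continuity force uniqueness. Each proper face of $\Cc$ is itself a rational cone, so the same argument applied face-by-face extends $\Fb_S$ to all of $\Cc$. The possible failure of global continuity is the familiar convex-analytic phenomenon that the limit of $\Fb_S$ along a ray approaching a boundary face can differ from the value intrinsically defined on that face.

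The main obstacle is the subsidiary but genuine point that the componentwise extensions assemble into a bona fide divisor-valued function $\Fb_S\colon\Interior\Cc\to\Div_\RR^+ S$: one needs that on each compact subset of $\Interior\Cc$ only finitely many prime divisors $E$ carry non-zero $\mult_E\Fb_S$. I expect this uniform finite-support claim to follow from the adjoint/plt structure: since $D\colon\RR^r\to\Div_\RR X$ is linear and all $\Supp B(\lambda)$ lie in a fixed snc divisor by the preceding normalizations, the restrictions $D(\lambda)_{|S}$ — and hence the fixed parts of $|nD(\lambda)|_{S}$ — have support in a fixed finite union of prime divisors on $S$ independent of $\lambda$. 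Once this is in place, everything else is formal convex analysis combined with the section-multiplication argument.
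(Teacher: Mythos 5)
Your proposal follows essentially the same route as the paper's (very terse) proof: extend by homogeneity to $\Cc\cap\QQ^r$, observe that the resulting function is degree-$1$ homogeneous and convex, and conclude that it is locally Lipschitz (hence locally uniformly continuous) so that it extends uniquely to a continuous function on $\Interior\Cc$. You supply additional detail the paper omits — the section-multiplication argument for subadditivity, the coefficient-wise reduction to real-valued convex functions, the face-by-face treatment of the boundary, and the finite-support concern (which the paper handles slightly later via the additional normalization that $\Supp \Fb_S(\lambda)$ lies in a fixed snc divisor $F$ on $S$) — but the underlying mechanism is identical, and the slight imprecision where you invoke ``density plus continuity forces uniqueness'' is repaired exactly by the paper's local-Lipschitz observation, which gives both existence and uniqueness of the continuous extension from the dense set of rational points.
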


 \begin{proof}
   By homogeneity I extend to $\Fb_S \colon \Cc \cap \QQ^r \to
   \Div_\RR^+ S$; this function is homogeneous convex hence locally
   Lipschitz hence locally uniformly continuous hence it can uniquely
   be extended to a function on $\Cc$ continuous on $\Interior \Cc$.
 \end{proof}

After some further blowing up, I may in addition assume:

\begin{enumerate}
\item[5.] there is a fixed snc divisor $F$ on $S$ such that, for all
  $\lambda \in \Lambda$, $\Supp \Fb_S(\lambda) \subset F$.
\end{enumerate}

Now for $\wb \in \Cc$ write: 
\[
\Omega(\wb)=\bigl(A+B(\wb)\bigr)_{|S};
\quad
\Phi(\wb)=\Omega(\wb)\wedge\Fb_S(\wb);
\quad
\Theta(\wb)=\Omega(\wb)-\Phi(\wb) .
\]

By construction, for all $\wb \in \Cc$, $0\leq
\Theta(\wb)\leq \Omega(\wb)$ and
$\Theta(\wb)+\Phi(\wb)=\Omega(\wb)$. 

\begin{lem}
  \label{lem:6}
 For all $\lambda \in \Lambda$, there is $\ZZ\ni n=
   n(\lambda)>0$ such that
\[
\Phi (\lambda) = \Omega(\lambda)\wedge
\frac1{n} \Fix \bigl(|D(n\lambda)|_{S} \bigr) .
\]
In particular, for all $\lambda \in \Lambda$, $\Theta (\lambda)\in
\Div_\QQ^+ S$ is a rational divisor (and so is $\Phi (\lambda)$).
\end{lem}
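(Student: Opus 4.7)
I work componentwise on $S$: for $\lambda\in\Lambda$ fixed, the claim reduces to finding $n$ such that at every prime divisor $E\subset S$,
\[
\min\bigl\{\mult_E\Omega(\lambda),\,\tfrac1n\mult_E\Fix|D(n\lambda)|_{S}\bigr\}=\min\bigl\{\mult_E\Omega(\lambda),\,\mult_E\Fb_S(\lambda)\bigr\}.
\]
By the setup (items (1)--(5)), only the finitely many $E\subset F\cup\Supp\Omega(\lambda)$ can contribute, so it suffices to handle each $E$ separately and take a common divisibility.

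For each such $E$, the sequence $a_n^E:=\mult_E\Fix|D(n\lambda)|_{S}$ (over admissible $n$, i.e.\ those with $nD(\lambda)$ integral) is subadditive because $D$ is linear and $|D(n\lambda)|_S\cdot|D(m\lambda)|_S\subset|D((n{+}m)\lambda)|_S$. By Fekete's lemma, $a_n^E/n\to \inf_n a_n^E/n=\mult_E\Fb_S(\lambda)$; in particular $a_n^E/n\geq\mult_E\Fb_S(\lambda)$ for every admissible $n$. Components with $\mult_E\Omega(\lambda)\leq\mult_E\Fb_S(\lambda)$ are therefore automatic: both sides equal $\mult_E\Omega(\lambda)$ for any $n$. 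The work is on the \emph{critical} components, those with $\mult_E\Omega(\lambda)>\mult_E\Fb_S(\lambda)$, where the desired equality becomes $a_n^E/n=\mult_E\Fb_S(\lambda)$---the infimum must be realised at some finite $n$.

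To realise it, I apply sharp lifting (Theorem~\ref{thm:1}) to $p(K_X+\Delta(\lambda))=D(n\lambda)$ with $p=nr(\lambda)$, choosing $\Phi_p=\Omega(\lambda)\wedge\Fb_S\bigl(K_X+\Delta(\lambda)+A/p\bigr)$ and $\Theta_p=\Omega(\lambda)-\Phi_p$. Sharp lifting yields
\[
|D(n\lambda)|_{S}\supset |p(K_S+\Theta_p)|+p\Phi_p,
\]
and for $p$ large every critical component lies in $\Supp\Theta_p$. Theorem $B_{n-1}^+$ applied to the $(n{-}1)$-dimensional pair $(S,\Theta_p)$ then supplies a uniform integer $r_0>0$ such that, after replacing $p$ by $r_0 p$, no component of $\Supp\Theta_p$ appears in $\Fix|r_0 p(K_S+\Theta_p)|$. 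Combined with the universal lower bound $a_m^E\geq m\mult_E\Fb_S(\lambda)$, this squeezes $\tfrac1{r_0 n}a_{r_0 n}^E$ between $\mult_E\Fb_S(\lambda)$ and $\mult_E\Phi_p$; the $A/p$-perturbation built into $\Phi_p$ forces $\Phi_p$ to stabilise at $\Phi(\lambda)$ on the critical components for suitably divisible $r_0 n$. Since only finitely many $E$ are in play, a single common $n$ handles them all simultaneously, and then rationality of $\Phi(\lambda)$ and $\Theta(\lambda)$ follows immediately from rationality of $\tfrac1n\Fix|D(n\lambda)|_{S}$ and $\Omega(\lambda)$.

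The main obstacle is precisely this infimum-attainment: a priori the Fekete limit of a subadditive integer sequence can be irrational and never attained, so $\Fb_S(\lambda)$ could in principle have irrational coefficients never realised by any $\frac1n\Fix|D(n\lambda)|_S$. The rationality and attainment here is a genuine output of the adjoint structure, extracted by pairing the $A/p$-perturbation in sharp lifting (which lets $\Phi_p$ drop strictly below the naive $F_p$) with the uniform vanishing of fixed components supplied by $B_{n-1}^+$; without the inductive hypothesis on $(n{-}1)$-dimensional pairs no such rational control would be available.
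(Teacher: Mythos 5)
The paper itself offers no proof of this lemma: it defers entirely to \cite[Theorem~7.1]{Hc}, describing the argument only as ``an application of tinkering lifting'' that is ``simpler than and based on the same idea of the proof of lemma~\ref{lem:3}.'' Your broad outline---lifting lemma plus Theorem~$B_{n-1}^+$, with the crux being attainment of the asymptotic infimum and hence rationality of $\Fb_S(\lambda)$ on the support of $\Omega(\lambda)$---is plausibly the right shape, and you have correctly identified the genuine difficulty (a subadditive integer sequence a priori has an irrational, unattained Fekete limit).

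However, your execution has a concrete gap. You apply sharp lifting with the specific choice $\Phi_p=\Omega(\lambda)\wedge\Fb_S\bigl(K_X+\Delta(\lambda)+A/p\bigr)$, but Theorem~\ref{thm:1} requires $p\Phi$ to be integral, and your $\Phi_p$ is not even known to be rational (Definition~\ref{dfn:8} warns precisely that $\Fb_S$ of a $\QQ$-divisor can have irrational coefficients). The same objection hits your invocation of Theorem~$B_{n-1}^+$ for the pair $(S,\Theta_p)$: that theorem operates on rational data, whereas $\Theta_p=\Omega(\lambda)-\Phi_p$ inherits the possible irrationality. So the two tools you lean on are both applied outside their stated hypotheses, and the conclusion you want---rationality and attainment---is effectively presupposed. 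The natural repair is to round $\Phi_p$ up to a $\QQ$-divisor with $p\Phi_p$ integral, but then your squeeze
\[
\mult_E\Fb_S(\lambda)\;\leq\;\tfrac1{r_0 n}a_{r_0 n}^E\;\leq\;\mult_E\Phi_p
\]
acquires an $O(1/p)$ slack on the right and no longer closes; asserting that ``the $A/p$-perturbation forces $\Phi_p$ to stabilise at $\Phi(\lambda)$'' is exactly the point that needs proof, not a consequence one may simply read off. Closing this gap requires an additional mechanism---in Lazi\'c's treatment it is the $\varepsilon$-room built into tinkering lifting (Corollary~\ref{cor:2}) combined with the \emph{uniform} integer $r$ supplied by Theorem~$B_{n-1}^+$, used in a way that eliminates the rounding error rather than merely letting it tend to zero---and your sketch does not supply it.
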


\begin{proof}
  The proof is explained very well in \cite[Theorem~7.1]{Hc}; it is an
  application of tinkering lifting; it is simpler than and based on
  the same idea of the proof of lemma~\ref{lem:3} below.
\end{proof}

The proof of theorem~\ref{thm:12} follows easily if I show that
\[
\Theta_{|\Pi\cap \Cc}\colon \Pi\cap \Cc \to \Div_\RR^+ S
\] 
is \textbf{piecewise affine}. (Indeed, write
$D_S(\lambda)=r(\lambda)\bigl(K_S+\Theta (\lambda) \bigr)$. By
lemma~\ref{lem:6} and sharp lifting, the restricted ring $R_S(X;D)$
and the adjoint ring $R(S,D_S)$ have a common Veronese subring.)  I don't
prove the statement completely.  Instead, in the remaining part of
this section, I prove:

\begin{lem}
  \label{lem:3}
  Let $\xb\in \Pi(\RR)$ and assume that the smallest rationally defined
  affine subspace $U\subset \RR^r$ containing $\xb$ is $\Pi$.
  Then, $\Theta_{|\Pi\cap \Cc}$ is affine in a neighbourhood of $\xb$.
\end{lem}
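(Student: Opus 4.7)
The plan is to work componentwise against the fixed snc divisor $F=\sum_k F_k$ on $S$ given by assumption~5 and show that, for each $k$, the multiplicity $\varphi_k(\wb):=\mult_{F_k}\Phi(\wb)$ is affine in a neighborhood of $\xb$ in $\Pi\cap\Cc$. Since $\Omega_{|\Pi}$ is affine rational and $\Theta=\Omega-\sum_k\varphi_k F_k$, this immediately yields the lemma. Write $\omega_k(\wb):=\mult_{F_k}\Omega(\wb)$, which is affine with rational coefficients on $\Pi$, and $f_k(\wb):=\mult_{F_k}\Fb_S(\wb)$, which by lemma~\ref{lem:7} is convex on $\Cc$ and continuous on $\Interior\Cc$; then $\varphi_k=\min(\omega_k,f_k)$.

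First I would dispose of the case $\omega_k(\xb)<f_k(\xb)$: by continuity the strict inequality persists on a neighborhood of $\xb$, so $\varphi_k\equiv\omega_k$ there, which is affine. It therefore suffices to show that $f_k$ itself is affine near $\xb$ when $f_k(\xb)\leq\omega_k(\xb)$. Granting this, continuity handles the strict sub-case $f_k(\xb)<\omega_k(\xb)$; in the boundary sub-case $f_k(\xb)=\omega_k(\xb)$, any non-affineness of $\min(\omega_k,f_k)$ would force the locus $\{\omega_k=f_k\}$ to be a proper rational affine subspace of $\Pi$ containing $\xb$, contradicting the hypothesis $U=\Pi$.

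The hard part, which I expect to be the main obstacle, is showing that $f_k$ is affine near $\xb$. The strategy is to exploit lemma~\ref{lem:6}, which guarantees that at every rational $\lambda\in\Lambda$ the value $\Phi(\lambda)$ is computed by a single finite $n=n(\lambda)$ as $\Omega(\lambda)\wedge(1/n)\Fix|D(n\lambda)|_{S}$; in particular $f_k(\lambda)\in\QQ$. Convexity of $f_k$ supplies at each point a (possibly non-unique) supporting affine function from below, and at rational $\lambda$ these supports can be arranged to have rational slope: concretely, I would apply tinkering lifting (corollary~\ref{cor:2}) with $p=n(\lambda)$ at rational points $\lambda$ close to $\xb$, chaining the rational values $f_k(\lambda)$ at the vertices of a small rational polyhedral neighborhood of $\xb$ to pin down a rational affine function $L$ with $L\leq f_k$ on that neighborhood and $L(\lambda)=f_k(\lambda)$. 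If $f_k$ were not affine at $\xb$, two such distinct rational supports $L_1\neq L_2$ would satisfy $L_1(\xb)=L_2(\xb)=f_k(\xb)$, forcing $\xb$ into the proper rational affine subspace $\{L_1=L_2\}\cap\Pi\subsetneq\Pi$, violating $U=\Pi$. Hence $f_k$ is affine on an open neighborhood of $\xb$, and assembling over $k$ yields the desired affineness of $\Theta$.
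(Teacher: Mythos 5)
Your overall skeleton — reduce to showing the componentwise asymptotic fixed part $f_k=\mult_{F_k}\Fb_S$ is rational affine near $\xb$, then let minimality of $\Pi$ kill any "break" locus — captures the flavour of Lazi\'c's argument, and sidestepping the non-concavity of $\Theta$ by working with the convex $f_k$ is a reasonable reorganisation. But the proposal has a genuine gap precisely where the paper does all its work: the assertion that the supporting affine functions of $f_k$ at rational points near $\xb$ ``can be arranged to have rational slope'' via corollary~\ref{cor:2} and ``chaining the rational values $f_k(\lambda)$.'' That sentence is a restatement of what needs to be proved, not a proof of it. A convex function that takes rational values at rational points need not admit rational supporting hyperplanes; the point of the lifting machinery is to manufacture them, and the paper does so by a very carefully calibrated argument that you have compressed out of existence. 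Specifically, the paper (i) fixes $\varepsilon>0$ subject to six features — including a local Lipschitz constant for $\Theta$, a lower bound $\delta$ keeping you away from the boundary of the $\wedge$, ampleness of $\Delta(\wb)-\Delta(\xb)+A/p$, and a consequence of theorem~$B_{n-1}^+$ — (ii) then chooses $M$ and Diophantine approximations $\wb_i$ of $\xb$ with the coupled bound $\|\xb-\wb_i\|<\varepsilon/p_i$ (not merely $\|\xb-\wb_i\|<\varepsilon$), where $p_i\wb_i$ and $p_i\Theta_i$ are integral and $p_i$ is divisible by $M$; and (iii) only then proves the key inclusion via \emph{sharp} lifting (theorem~\ref{thm:1}, not tinkering lifting), giving $\Theta_i\leq\Theta(\wb_i)$, which together with the identity $\Theta(\xb)=\sum\mu_i\Theta_i$ and concavity forces equality. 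Your sketch replaces this entire calibration with a wave at ``tinkering lifting with $p=n(\lambda)$,'' but the $n(\lambda)$ from lemma~\ref{lem:6} grows without control as $\lambda\to\xb$, so the error terms in the lifting lemma are not a priori bounded unless one couples the approximation quality to the denominator as in lemma~\ref{lem:1}; this coupling is the heart of the matter and is missing.

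Two secondary issues. First, lemma~\ref{lem:6} gives rationality of $\Phi(\lambda)=\Omega(\lambda)\wedge\Fb_S(\lambda)$, not of $\Fb_S(\lambda)$ itself, so $f_k(\lambda)\in\QQ$ is only guaranteed where the minimum is achieved by $\Fb_S$; your argument implicitly uses rationality of $f_k$ at rational points, which needs care near the boundary $\{\omega_k=f_k\}$. Second, the boundary case $f_k(\xb)=\omega_k(\xb)$ is exactly the paper's ``nagging difficulty''; the paper dispatches it by a separate perturbation argument (adding a small general $G$ and replacing $A$ by $A-G$) and the observation that $\Fb_S$ is unchanged, concluding that the break locus is a rational affine subspace so must be all of $\Pi$. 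Your one-line version of this (``any non-affineness would force $\{\omega_k=f_k\}$ to be a proper rational affine subspace'') presupposes that $f_k$ is already known to be affine with rational slope near $\xb$ — i.e., it assumes what it was supposed to prove. As written, the proposal restates the difficulty rather than resolving it.
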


This is compelling, but note that it stops short of proving
theorem~\ref{thm:12}: the statement implies that there is a
decomposition of $\Cc$ in rational subcones such that $\Theta$ is
affine on each subcone, but there is no guarantee that the
decomposition is locally finite, nor indeed that the subcones
themselves are finite. The proof of the lemma contains
all the ideas of Lazi\'c's proof of theorem~\ref{thm:12}. 

\begin{lem}[Diophantine approximation]
  \label{lem:1}
  Let $\xb\in \RR^n$; denote by $U$ the smallest rationally defined
  affine subspace containing $\xb$, and let $\dim U=m-1$. 
  
  Fix $\varepsilon>0$ and an integer $M>0$. There exist vectors
  $\wb_1, \dots, \wb_m \in U (\QQ)$ with the following properties:
  \begin{enumerate}
  \item For $i=1, \dots, m$, there are real numbers
\[
0<r_i<1\quad \text{with}\quad
\sum_{i=1}^m r_i=1, \quad 
\xb = \sum_{i=1}^m r_i\wb_i ;
\]
  \item there is a $m$-tuple $(p_1,\dots, p_m)$ of
  strictly positive integers, all $p_i$ divisible by $M$, such that
  all $p_i\wb_i\in \ZZ^n$ are integral, and
\[
|\!| \xb-\wb_i |\!|<\varepsilon/p_i  .
\] 
  \end{enumerate} \qed
\end{lem}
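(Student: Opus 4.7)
The statement is a classical simultaneous Diophantine approximation result; the minimality hypothesis on $U$ is exactly that $\xb$ is \emph{$\QQ$-generic} inside $U$, i.e., in any rational affine parametrization of $U$ the coordinates of $\xb$ are $\QQ$-independent together with $1$, which is precisely what powers Dirichlet.

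I would first fix $\xb_0\in U(\QQ)$ and a rational basis of $U-\xb_0$, so that $U$ is identified with $\RR^{m-1}$; then pick, once and for all, a rational affine simplex $\Sigma\subset U$ with rational vertices $\vb_1^\star,\dots,\vb_m^\star$ and $\xb$ strictly in its interior, with barycentric coordinates $s_1,\dots,s_m>0$ satisfying $\sum s_i=1$. For a small rational parameter $t>0$ to be chosen last, I would consider the shrunken simplex with vertices $\vb_i(t):=\xb+t(\vb_i^\star-\xb)$; a short argument (using $t\in\QQ$ with $t\neq 1$, and the rationality of $\vb_i^\star$) shows these inherit the $\QQ$-genericity of $\xb$, and $\xb$ sits as an interior point of $\operatorname{conv}(\vb_1(t),\dots,\vb_m(t))$ with the same barycentric coordinates $s_i$.

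The core step is to apply the \emph{simultaneous} form of Dirichlet's theorem to the stacked vector $(M\vb_1(t),\dots,M\vb_m(t))\in\RR^{m(m-1)}$: for every $Q\ge 1$ one obtains an integer $q\le Q^{m(m-1)}$ and integer vectors $\wb_i'\in\ZZ^{m-1}$ with $\|qM\vb_i(t)-\wb_i'\|<1/Q$ for all $i$; setting $\wb_i:=\wb_i'/(qM)\in U(\QQ)$ and $p_i:=qM$ gives $M\mid p_i$, $p_i\wb_i\in\ZZ^{m-1}$, and $\|\vb_i(t)-\wb_i\|<1/(p_iQ)$. Taking $Q$ larger than $2/\varepsilon$ yields $\|\vb_i(t)-\wb_i\|<\varepsilon/(2p_i)$ together with a uniform upper bound $P:=MQ^{m(m-1)}$ on $p_i$ independent of $t$. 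Now choose $t>0$ rational with $t\max_i\|\vb_i^\star-\xb\|<\varepsilon/(2P)$, so that by the triangle inequality
$$
\|\xb-\wb_i\|\le t\|\vb_i^\star-\xb\|+\|\vb_i(t)-\wb_i\|<\varepsilon/p_i;
$$
enlarging $Q$ further if needed, the perturbation $\|\wb_i-\vb_i(t)\|<1/(p_iQ)$ becomes negligible compared to the shrunken-simplex edges of length $\sim t$, so $\operatorname{conv}(\wb_1,\dots,\wb_m)$ is a small deformation of $\operatorname{conv}(\vb_1(t),\dots,\vb_m(t))$ and $\xb$ remains strictly in its interior with barycentric coordinates $r_i$ close to $s_i$; in particular $0<r_i<1$ and $\sum r_i=1$.

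The main obstacle is the interlocked order of quantifiers: Dirichlet denominators depend on the targets $\vb_i(t)$, which depend on $t$, which in turn must be chosen small compared to those denominators. The simultaneous form of Dirichlet resolves this by producing a \emph{single} uniform upper bound $P$ depending only on $Q$ (hence on $\varepsilon$ and $M$); one may thus fix $Q$ first and only then shrink $t$ accordingly. All other details — arranging $M\mid p_i$, translating back to coordinates on $\RR^n$, and verifying strictness of the barycentric coordinates — are routine.
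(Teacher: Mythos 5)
The paper gives no proof of this lemma (it is stated and immediately closed with \qed), so I evaluate your proof on its own.

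There is a genuine gap, and it is structural rather than a matter of quantifier bookkeeping. Because you apply simultaneous Dirichlet to the single stacked vector $(M\vb_1(t),\dots,M\vb_m(t))$, you obtain \emph{one} denominator $q$ and set $p_1=\dots=p_m=qM$. But the lemma cannot be satisfied with a common denominator when $\varepsilon<1/2$. Indeed, the $m$ vectors $\wb_i$ must be pairwise distinct (the conclusion $\xb=\sum r_i\wb_i$ with all $r_i>0$ and $\wb_i\in U(\QQ)$ forces the affine span of the $\wb_i$ to be a rational affine subspace containing $\xb$, hence to contain $U$ by minimality, so the $m$ points must be affinely independent). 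Two distinct points of $\tfrac1{p}\ZZ^n$ satisfy $\|\wb_i-\wb_j\|\ge 1/p$, while your conclusion $\|\xb-\wb_i\|<\varepsilon/p$ forces $\|\wb_i-\wb_j\|<2\varepsilon/p$; together these give $\varepsilon>1/2$. Concretely, when $\varepsilon<1/2$ the triangle-inequality estimate you derive and the distinctness of the $\wb_i$ are mutually exclusive; in practice the Dirichlet output collapses all $\wb_i$ to the same lattice point of $\tfrac1{qM}\ZZ^n$ once $t$ is chosen small enough to meet your constraint $t<\varepsilon/(2P)$, and $\xb$ then fails to lie in the interior of their (degenerate) convex hull. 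The heuristic that the perturbation $\sim 1/(p_iQ)$ is negligible compared to the shrunken edge length $\sim t$ also fails quantitatively: the worst case $q=1$ gives a perturbation of order $1/(MQ)$, while your upper bound on $t$ is of order $\varepsilon/(MQ^{m(m-1)})$, so the ratio blows up with $Q$.

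A symptom of the problem is that your argument never really uses the hypothesis that $U$ is the \emph{smallest} rational affine subspace containing $\xb$: Dirichlet's theorem holds for arbitrary real targets, and the remark that the $\vb_i(t)$ inherit $\QQ$-genericity plays no role downstream. That hypothesis is precisely what the lemma needs, because it is what guarantees (via Kronecker/Weyl) that the points $p\xb\bmod\ZZ^n$, as $p$ ranges over multiples of $M$, are dense in the relevant torus. The intended proof uses this density to pick \emph{different} denominators $p_1,\dots,p_m$, one for each vertex, so that each $p_i\xb$ lies within $\varepsilon$ of a lattice point \emph{and} the small error vectors $\wb_i-\xb=-\{p_i\xb\}/p_i$ point into $m$ prescribed open cones whose negatives positively span $U-\xb$; then $\mathbf 0$ lies strictly inside $\operatorname{conv}(\wb_i-\xb)$, giving both the interior-point condition and the bound $\|\xb-\wb_i\|<\varepsilon/p_i$ simultaneously. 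Dirichlet alone cannot control the direction of the approximation error, which is why a common denominator, and more generally an undirected approximation, cannot work here.
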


\begin{proof}[Proof of lemma~\ref{lem:3}]
There is a nagging difficulty with the proof:

\subsubsection*{A nagging difficulty and an additional assumption}
\label{sec:small-difficulty-an}
The point is this: by definition,
\[
\Phi (-)=\Omega (-)\wedge \Fb_S (-)\quad
\text{and}
\quad
\Theta (-)=\Omega (-)-\Phi (-);
\]
hence, although $\Omega\colon \Cc \to \Div_\RR^+ S$ is linear, and
$\Fb_S \colon \Cc \to \Div_\RR^+ S$ is convex, $\Phi (-)$ is not
necessarily convex, and $\Theta (-)$ is not necessarily concave. To be
more specific, if for some prime divisor $P\subset S$, $\mult_P \Omega (\xb)
= \mult_P \Fb_S (\xb)$, then $\Theta (-)$ may fail to be concave in a
neighbourhood of $\xb$. I first run the proof under the following
\emph{additional assumption}:
\[
\text{For all prime divisors}\;
P\subset S, 
\quad 
\mult_P \Omega (\xb) \not = \mult_P \Fb_S (\xb).
\]
In the proof below, I only use the additional assumption to ensure
that $\Theta (-)$ is concave in a neighbourhood of $\xb$.  
At the end, I briefly explain how to get around this difficulty.

\subsubsection*{The strategy of the proof}
\label{sec:strategy-proof}

The idea of the proof is to choose a real $\varepsilon >0$, an integer
$M>0$ and a rational Diophantine approximation
\[
(\wb_i,\Theta_i)\in
\Pi(\QQ)\times \Div_\QQ (S)\quad
\text{of the vector}
\quad
\bigl(\xb,\Theta(\xb)\bigr)\in \Pi(\RR)\times \Div_\RR(S) ,
\]
such that
\begin{enumerate}
\item[i.] For $i=1,\dots,r$, there are real numbers 
\[0<\mu_i<1
\quad 
\text{with}
\quad \sum_{i=1}^r\mu_i=1, \;\text{and}\;
\begin{cases}
\xb & = \sum_{i=1}^r \mu_i \wb_i ,\\
\Theta (\xb) & =\sum_{i=1}^r \mu_i \Theta_i . 
\end{cases}
\]
In particular this implies that $\Delta(\xb)=\sum \mu_i
\Delta(\wb_i)$. (All this is guaranteed by lemma~\ref{lem:1}.)  In
addition I require that:
\item[ii.] $\Theta_i \leq \Theta (\wb_i)$.
\end{enumerate}
Indeed, once I know this, then, by concavity of $\Theta$:
\[
\sum \mu_i \Theta (\wb_i) \leq \Theta (\xb) = \sum \mu_i \Theta_i  .
\]
I deduce $\Theta (\xb)=\sum \mu_i \Theta (\wb_i)$ and, by concavity
again, this implies that $\Theta$ is affine on the convex span
of the $\wb_i$.

\subsubsection*{Choice of $\varepsilon$}
\label{sec:choice-varepsilon}

I choose $\varepsilon>0$ small enough that it has the following 
\textbf{features}: 
\begin{enumerate}
\item[(a)] \label{eps:a} $\Theta$ is concave in $\{\wb \mid
  |\!|\wb-\xb|\!|<\varepsilon \}$. (This is OK by the additional assumption.)
\item[(b)] \label{eps:b} There is a local Lipschitz constant $C=C_\xb$
  such that:
  \begin{equation*}
    \label{eq:lipschitz}
\text{If}\; |\!|\wb-\xb|\!|<\varepsilon, \quad
\text{then}\; |\!|\Theta (\wb)-\Theta (\xb)|\!|<C|\!|\wb-\xb|\!|  .
  \end{equation*}
    (All concave functions are locally Lipschitz.)
\item[(c)] \label{eps:c} There is a constant $0<\delta<1$ with the
  following property: For all prime divisors $P\subset S$:
  \begin{multline*}
    \label{eq:delta}
\text{If}\; \mult_P \bigl( \Omega(\xb) - \Theta(\xb)\bigr)>0,\; 
\text{and}\; |\!|\wb-\xb|\!|<\varepsilon, \\
\text{then}\; \mult_P \bigl( \Omega(\wb) - \Theta(\wb)\bigr)>\delta,
  \end{multline*}
  and I also assume that $\varepsilon <\delta$.
\item[(d)] \label{eps:d} If $p\geq 1$ and
  $|\!|\wb-\xb|\!|<\frac{\varepsilon}{p}$, then the $\QQ$-divisor
\[
\Delta (\wb)-\Delta (\xb) + \frac{A}{p} 
\] 
is ample. 
\item[(e)] \label{eps:e} If $|\!|\wb-\xb|\!|<\varepsilon$, then the
  $\QQ$-divisor
\[
(C+1)\frac{\varepsilon}{\delta}\Bigl(K_X+\Delta (\wb) \Bigr) + A
\]
is ample.
\item[(f)] \label{eps:f} If $\Theta \in \Div_\QQ S$ and $|\!|\Theta
  -\Theta(\xb)|\!|<\varepsilon$, then no component of $\Theta$ is in
  the asymptotic fixed part
\[
\Fb (K_S+\Theta) .
\]
It is a simple consequence of Theorem~$B_{n-1}^+$ that I can arrange
for this to hold. 
\end{enumerate}

\subsubsection*{Choice of $M>0$}
\label{sec:choice-dioph-appr}

Next I choose $M>0$ such that the rational Diophantine approximation
given by lemma~\ref{lem:1}:
\[
(\wb_i,\Theta_i)\in
\Pi(\QQ)\times \Div_\QQ (S)\quad
\text{of the vector}
\quad
\bigl(\xb,\Theta(\xb)\bigr)\in \Pi(\RR)\times \Div_\RR(S) ,
\]
satisfies the following \textbf{conditions}:
\begin{enumerate}
\item \label{con:1} For $i=1,\dots,r$, there are real numbers 
\[0<\mu_i<1
\quad 
\text{with}
\quad \sum_{i=1}^r\mu_i=1, \;\text{and}\;
\begin{cases}
\xb & = \sum_{i=1}^r \mu_i \wb_i ,\\
\Theta (\xb) & =\sum_{i=1}^r \mu_i \Theta_i . 
\end{cases}
\]
 In particular this implies that $\Delta(\xb)=\sum \mu_i \Delta(\wb_i)$.
\item \label{con:2} There is an $r$-tuple $(p_1,\dots,p_r)\in \NN^r$
  of positive integers such that:
  \begin{itemize}
  \item $(p_i\wb_i;p_i\Theta_i)\in \NN^r \times \Div_\ZZ (S)$ is integral;
  \item 
\[\text{For all i,}\quad
|\!|\xb -\wb_i|\!|<\frac{\varepsilon}{p_i}
\quad
\text{and}
\quad
|\!|\Theta(\xb)-\Theta_i |\!| <
  \frac{\varepsilon}{p_i}  .
\] 
  \end{itemize}
\item \label{con:3} For all prime divisors $P\subset S$:
  \begin{itemize}
  \item If $\mult_P \Theta(\xb)< \mult_P \Omega(\xb)$, then also
    $\mult_P \Theta_i < \mult_P \Omega (\wb_i)$ (this is automatic from
    feature~(c));
  \item If $\mult_P \Theta(\xb)=\mult_P \Omega(\xb)$, then also
  $\mult_P \Theta_i = \mult_P \Omega (\wb_i)$.
  \item If $\mult_P \Theta(\xb)=0$, then also $\mult_P \Theta_i=0$.
  \end{itemize}
  (Although the second bullet point doesn't strictly speaking follow
  from a blind usage of lemma~\ref{lem:1}, it is easy to arrange for
  it to hold.  Indeed in this case $\mult_P \Theta (\xb)=\sum \mu_i
  \mult_P \Omega(\wb_i)$ and it pays to declare from the start that
\[
 \mult_P \Theta_i = \mult_P \Omega (\wb_i) .
\]
  The third bullet point is similar and easier.) 
\item \label{con:4} $M$ is large enough that the $p_i$ are large
  enough and divisible enough that:  
\[
\Fb (K_S+\Theta_i) = \Fix \left|p_i(K_S+\Theta_i)\right| .
\] 
 (This can easily be arranged using theorem~$B_{n-1}^+$.)
 \end{enumerate}

\subsubsection*{The key inclusion}
\label{sec:key-inclusion}

For all $i=1,\dots,r$ I show the \emph{key inclusion}: 
\begin{equation}
  \label{eq:key_inclusion}
  \left|p_i\Bigl(K_X+\Delta(\wb_i)\Bigr)  \right|_{S} 
  \supset\left| p_i\Bigl(K_S+\Theta_i \Bigr)\right| +
  p_i\Bigl(\Omega(\wb_i)-\Theta_i \Bigr)  .
\end{equation}
The key inclusion allows me to control the restricted algebra in a
neighbourhood of $\xb$: as I show below, it readily implies that
$\Theta_i \leq \Theta (\wb_i)$. If you get bored with the details of
the proof, you may want to press forward to the conclusion.

I plan to prove this using sharp lifting. To begin with, I remark that
I am in the \emph{general initial set-up} of
section~\ref{sec:general-set-up-1}.

For all $i$, I now check that the specific assumptions of sharp lifting are
satisfied; that is:
\begin{equation*}
  \label{eq:need_to_check}
  \Omega(\wb_i) \wedge \Fb_i \leq \Omega(\wb_i)-\Theta_i 
\end{equation*}
where $\Fb_i= \Fb_S \bigl(K_X+\Delta(\wb_i)+A/p_i\bigr)$.
For all prime divisors $P\subset S$ I check that
\begin{equation*}
  \label{eq:need_to_check_P}
\mult_P \bigl(\Omega(\wb_i) \wedge \Fb_i\bigr)
\leq \mult_P \bigl(\Omega(\wb_i)-\Theta_i \bigr)  .
\end{equation*}
The discussion breaks down in two cases:

\paragraph{Case 1: $\mult_P \Theta (\xb) =\mult_P \Omega (\xb)$ .}
\label{sec:theta-xb=omega-xb}

By condition~\ref{con:3}, $\mult_P \Theta_i= \mult_P
\Omega(\wb_i)$. By feature~(d), $\Delta (\wb_i)-\Delta (\xb) + A/p_i$
is ample, therefore:
\begin{multline*}
\mult_P \Fb_S \Bigl(K_X+\Delta (\wb_i) +\frac{A}{p_i}\Bigr) =\\ 
=\mult_P\Fb_S \Bigl(K_X + \Delta (\xb)+ \Bigl(\Delta (\wb_i)-\Delta (\xb) +
\frac{A}{p_i}\Bigr)\Bigr)\leq \\   
\leq \mult_P\Fb_S \bigl(K_X + \Delta (\xb)\bigr)=0 .
\end{multline*}

\paragraph{Case 2: $\mult_P \Theta (\xb) <\mult_P \Omega (\xb)$ .}
\label{sec:theta-xb-}
Using feature~(e):
\begin{multline*}
\mult_P \Fb_i = \mult_P \Fb_S\Bigl(K_X+\Delta(\wb_i)+\frac{A}{p_i}\Bigr)\leq\\
\leq \Bigl(1-\frac{(C+1)}{p_i}\frac{\varepsilon}{\delta}\Bigr)\mult_P
\Fb_S \bigl(K_X + \Delta(\wb_i)\bigr)  =\\=
\Bigl(1-\frac{(C+1)}{p_i}\frac{\varepsilon}{\delta}\Bigr)
\mult_P \Fb_S (\wb_i) .
\end{multline*}
This implies that
\begin{multline*}
  \label{eq:stuff}
  \mult_P \Omega(\wb_i)\wedge \Fb_i \leq
  \Bigl(1-\frac{(C+1)}{p_i}\frac{\varepsilon}{\delta}\Bigr)
\mult_P \bigl(\Omega(\wb_i)-\Theta(\wb_i)\bigr)\\
(\text{indeed, by definition}\;
\Omega (\wb_i)-\Theta(\wb_i)=\Omega(\wb_i)\wedge \Fb_S (\wb_i)) ; 
\end{multline*}
and, finally, using feature~(c) and:
\begin{equation*}
|\!|\Theta(\wb_i)-\Theta_i |\!|
\leq |\!|\Theta (\wb_i)-\Theta (\xb)|\!|+|\!|\Theta (\xb)-\Theta_i|\!|
\leq
C\frac{\varepsilon}{p_i}+\frac{\varepsilon}{p_i}=(C+1)\frac{\varepsilon}{p_i}, 
\end{equation*}
in tandem, I get:
\begin{multline*}
  \label{eq:finally}
  \Bigl(1-\frac{(C+1)}{p_i}\frac{\varepsilon}{\delta}\Bigr)
\mult_P \bigl(\Omega(\wb_i)-\Theta(\wb_i)\bigr) \leq\\
\leq
\mult_P \bigl(\Omega(\wb_i)-\Theta(\wb_i)\bigr)-\frac{C+1}{p_i}
\varepsilon
\leq\\
\leq
\mult_P \bigl(\Omega(\wb_i)-\Theta(\wb_i)\bigr)
+\mult_P \bigl(\Theta(\wb_i) -\Theta_i\bigr)=\\
= \mult_P \bigl(\Omega(\wb_i) -\Theta_i\bigr)  .
\end{multline*}

\subsubsection*{Conclusion}
\label{sec:conclusion}

I show that the key inclusion of equation~\ref{eq:key_inclusion}
implies the statement. By construction of the function $\Theta \colon
\Cc \to \Div_\RR^+ S$ and the lifting lemma, I know that:
\[
\left|p_i\bigl(K_X+\Delta(\wb_i)\bigr)\right|_{S} = \left|
p_i\bigl(K_S + \Theta(\wb_i)\bigr)\right| + \Phi (\wb_i).
\]
Thus, the key inclusion readily implies that:
\begin{equation}
  \label{eq:getting_there}
  \Mob \left|p_i\bigl(K_S + \Theta_i \bigr)\right| \leq \Mob  \left|
    p_i\bigl(K_S + \Theta(\wb_i)\bigr)\right| .
\end{equation}
Now, by feature~(f), no component of $\Theta_i$ is in the
fixed part of $\left|p_i(K_X+\Theta_i) \right|$; thus, the last
equation implies
\[
\Theta_i\leq \Theta (\wb_i) .
\]
\end{proof}

\subsubsection*{How to remove the additional assumption}
\label{sec:how-remove-addit}

Assume that for some prime $P\subset S$ $\mult_P \Omega (\xb)=\mult_P
\Fb_S (\xb)$. Consider an effective divisor 
\[
G= \sum_{j=1}^r \varepsilon_j B_j .
\]
If the coefficients $0\leq \varepsilon_j$ are small enough, then:
\begin{itemize}
\item Writing $B^\prime (\lambda) = B(\lambda)+G>B(\lambda)$, all 
  the $\bigl(S, B^\prime(\lambda)_{|S}\bigr)$ are terminal, and:
\item $A -G$ is still ample, so I can choose $A^\prime \sim_\QQ A - G$
  meeting everything in sight transversally, and such that, upon setting 
  \[
  \Delta^\prime (\lambda)=S+A^\prime +B^\prime (\lambda);\quad
  \Omega^\prime (\lambda)=\bigl(A^\prime +B^\prime
  (\lambda)\bigr)_{|S} , 
  \] 
  then all the pairs $\bigl(S, \Omega^\prime (\lambda) \bigr)$
  are terminal.
\end{itemize}
 
Note that, from the definition, for all $\lambda \in \Lambda$:
\[
\Fb_S(\lambda)=\Fb_S \bigl(D(\lambda)\bigr)=\Fb_S \bigl(D^\prime
(\lambda) \bigr) .
\]
By choosing $A^\prime$ generically, I can arrange that the additional
assumption for $D^\prime$ is satisfied, and conclude as above that
$\Theta^\prime (-)$ is rational affine in a neighbourhood of $\xb$.
By construction:
\[
\text{In a neighbourhood of $\xb$},\quad
\mult_P
\bigl(\Omega^\prime (-)-\Theta^\prime (-)\bigr)=\mult_P \Fb_S(-) ,
\]
that is, in a neighbourhood of $\xb$, $\mult_P\Fb_S(-)$ also is
rational affine. But then
\[
\xb \in U=\{\wb \mid \mult_P\Omega(\wb)=\mult_P\Fb_S(\wb)\}
\]
implies that, in a neighbourhood of $\xb$,
$\mult_P\Omega(\wb)=\mult_P\Fb_S (\wb)$ ($U$ is affine and defined
over $\QQ$, hence $\Pi\subset U$ by minimality of $\Pi$), that is,
$\Theta (-)$ is concave in a neighbourhood of $\xb$ after all.  \qed

\bibliography{mmp.bib}

\newcommand{\etalchar}[1]{$^{#1}$}
\begin{thebibliography}{BCHM09}

\bibitem[ADHL10]{ivan0}
Ivan Arzhantsev, Ulrich Derenthal, Juergen Hausen, and Antonio Laface.
\newblock Cox rings, arXiv:1003.4229\setbox0=\hbox{2010}.

\bibitem[AH06]{MR2207875}
Klaus Altmann and J{\"u}rgen Hausen.
\newblock Polyhedral divisors and algebraic torus actions.
\newblock {\em Math. Ann.}, 334(3):557--607, 2006.

\bibitem[BCHM09]{Hb}
Caucher Birkar, Paolo Cascini, Christopher~D. Hacon, and James McKernan.
\newblock Existence of minimal models for varieties of log general type.
\newblock {\em J. Amer. Math. Soc.}, posted on November 13, 2009.
\newblock PII: S 0894-0347(09)00649-3 (to appear in print).

\bibitem[CL10]{CoLa}
Alessio Corti and Vladimir Lazi\'c.
\newblock Finite generation implies the {M}inimal {M}odel {P}rogram,
  arXiv:1005.0614\setbox0=\hbox{2010}.

\bibitem[Cor07]{MR2352762}
Alessio Corti, editor.
\newblock {\em Flips for 3-folds and 4-folds}, volume~35 of {\em Oxford Lecture
  Series in Mathematics and its Applications}.
\newblock Oxford University Press, Oxford, 2007.

\bibitem[ELM{\etalchar{+}}06]{MR2282673}
Lawrence Ein, Robert Lazarsfeld, Mircea Musta{\c{t}}{\u{a}}, Michael Nakamaye,
  and Mihnea Popa.
\newblock Asymptotic invariants of base loci.
\newblock {\em Ann. Inst. Fourier (Grenoble)}, 56(6):1701--1734, 2006.

\bibitem[FM00]{MR1863025}
Osamu Fujino and Shigefumi Mori.
\newblock A canonical bundle formula.
\newblock {\em J. Differential Geom.}, 56(1):167--188, 2000.

\bibitem[HM06]{MR2242631}
Christopher~D. Hacon and James McKernan.
\newblock Boundedness of pluricanonical maps of varieties of general type.
\newblock {\em Invent. Math.}, 166(1):1--25, 2006.

\bibitem[HM09]{Hc}
Christopher~D. Hacon and James McKernan.
\newblock Existence of minimal models for varieties of log general type {II}.
\newblock {\em J. Amer. Math. Soc.}, posted on November 13, 2009.
\newblock PII: S 0894-0347(09)00651-1 (to appear in print).

\bibitem[Kol92]{MR1225842}
J\'anos Koll\'ar, editor.
\newblock {\em Flips and abundance for algebraic threefolds}.
\newblock Soci\'et\'e Math\'ematique de France, Paris, 1992.
\newblock Papers from the Second Summer Seminar on Algebraic Geometry held at
  the University of Utah, Salt Lake City, Utah, August 1991, Ast{\'e}risque No.
  211 (1992).

\bibitem[Laz04]{MR2095472}
Robert Lazarsfeld.
\newblock {\em Positivity in algebraic geometry. {II}}, volume~49 of {\em
  Ergebnisse der Mathematik und ihrer Grenzgebiete. 3. Folge. A Series of
  Modern Surveys in Mathematics [Results in Mathematics and Related Areas. 3rd
  Series. A Series of Modern Surveys in Mathematics]}.
\newblock Springer-Verlag, Berlin, 2004.
\newblock Positivity for vector bundles, and multiplier ideals.

\bibitem[Laz09]{lazic09:_towar_mmp}
Vladimir Lazi\'c.
\newblock Adjoint rings are finitely generated,
  arXiv:0905.2707\setbox0=\hbox{2009}.

\bibitem[Nak04]{MR2104208}
Noboru Nakayama.
\newblock {\em Zariski-decomposition and abundance}, volume~14 of {\em MSJ
  Memoirs}.
\newblock Mathematical Society of Japan, Tokyo, 2004.

\bibitem[P{\u{a}}u08]{Paun}
Mihai P{\u{a}}un.
\newblock {R}elative critical exponents, non-vanishing and metrics with minimal
  singularities, arXiv:0807.3109\setbox0=\hbox{2008}.

\bibitem[Sho03]{MR1993750}
Vyacheslav~V. Shokurov.
\newblock Prelimiting flips.
\newblock {\em Tr. Mat. Inst. Steklova}, 240(Biratsion. Geom. Linein. Sist.
  Konechno Porozhdennye Algebry):82--219, 2003.

\bibitem[Siu98]{MR1660941}
Yum-Tong Siu.
\newblock Invariance of plurigenera.
\newblock {\em Invent. Math.}, 134(3):661--673, 1998.

\end{thebibliography}

\end{document}